\documentclass[10pt]{article} 
\usepackage{amsmath,amsfonts,amssymb}
\usepackage{mathtools}
\usepackage[english]{babel}
\usepackage[autolang=other,backend=bibtex,bibencoding=ascii,maxcitenames=4,mincitenames=1,style=trad-alpha]{biblatex}
\usepackage{color}
\usepackage{csquotes}
\usepackage{enumerate}
\usepackage{esint}
\usepackage[T1]{fontenc}
\usepackage[pdftex]{hyperref}
\usepackage{mathrsfs}
\usepackage{graphicx}
\usepackage[dvipsnames]{xcolor}
\usepackage{pgfplots}
\usepackage{subfigure}

\usepackage[charter]{mathdesign}

\setlength{\oddsidemargin}{-5mm}
\setlength{\evensidemargin}{-5mm}
\setlength{\textheight}{240mm}
\setlength{\textwidth}{175mm}
\setlength{\marginparsep}{0mm}
\setlength{\marginparwidth}{0mm}
\setlength{\hoffset}{-2mm}
\setlength{\voffset}{-9mm}
\setlength{\topmargin}{-10mm}
\setlength{\headheight}{10mm}
\setlength{\headsep}{8mm}
\setlength{\footskip}{10mm}

\renewcommand{\vec}[1]{\boldsymbol{#1}}
\renewcommand{\div}{\operatorname{div}}
\newcommand{\grad}{\vec{\nabla}}

\newcommand{\lifting}{g}

\providecommand{\abs}[1]{\left\lvert#1\right\rvert}
\providecommand{\norm}[1]{\lVert#1\rVert}
\providecommand{\dnorm}[1]{\left\lVert#1\right\rVert}

\newtheorem{theorem}{Theorem}[section]

\newtheorem{lemma}[theorem]{Lemma}
\newtheorem{proposition}[theorem]{Proposition}

\newtheorem{remark}{Remark}

\newenvironment{proof}{\textsc{\noindent Proof. \ignorespaces}}{\nopagebreak\hspace*{\fill}$\square$}

\hypersetup{
   colorlinks,
   linkcolor=black,
   citecolor=black,
   urlcolor=black
}

%

\AtEveryBibitem{\clearlist{language}} 

\title{On the method of reflections}
\author{Philippe Laurent\thanks{Institut de Recherche en Communications et Cybern\'etique de Nantes, CNRS, UMR 6597, \'Ecole des Mines de Nantes, 4, rue Alfred Kastler, 44307 Nantes, France (\url{philippe.laurent@mines-nantes.fr}).}, Guillaume Legendre\thanks{CEREMADE, UMR CNRS 7534, Universit\'e Paris-Dauphine, Universit\'e PSL, Place du Mar\'echal De Lattre De Tassigny, 75775 Paris cedex 16, France (\url{guillaume.legendre@dauphine.fr}).}, Julien Salomon\thanks{INRIA Paris, ANGE project-team, 2 rue Simone Iff, 75589 Paris cedex 12, France. Sorbonne Universit\'es, Laboratoire Jacques-Louis Lions (\url{julien.salomon@inria.fr}).}}

\bibliography{reflection}

\begin{document}
\maketitle
\begin{abstract}
This paper aims at reviewing and analysing the method of reflections, which is an iterative procedure designed for solving linear boundary value problems set in multiply connected domains. Being based on a decomposition of the domain boundary, this method is particularly well-suited to numerical solvers relying on boundary integral representation. For both the sequential and parallel forms of the method appearing in the literature, we interpret the procedure in terms of projection operators. Using a Hilbert space setting and orthogonality, we prove the unconditional convergence of the sequential form and propose a modification of the parallel one that makes it unconditionally converging. Several examples of boundary value problems that enter such a framework are given, an alternative proof of convergence is provided in a case which does not. A few numerical tests conclude the study.
\end{abstract}

\tableofcontents

\section{Introduction}
In 1911, inspired by the method of image charges in electromagnetism, Smoluchowski \cite{Smoluchowski:1911} introduced an iterative process to compute the hydrodynamic forces exerted on an assemblage of an arbitrary number of spheres falling in an unbounded viscous fluid. Later dubbed the \emph{method of reflections}, this technique has subsequently been featured prominently in reference textbooks on hydrodynamics at low Reynolds number, like those by Happel and Brenner \cite{Happel:1983}, Kim and Karrila \cite{Kim:1991} or Dhont \cite{Dhont:1996}, and employed in several articles dealing with the motion of particles immersed in a viscous fluid (see for instance \cite{Kynch:1959,Jones:1978,Chen:1988,Ichiki:2001,Wilson:2013}). It may be described as a systematic scheme by which a (generally exterior) linear boundary value problem associated with several ``objects'' (e.g., particles in the case of a suspension) may be solved by computing and summing solutions of boundary value problems involving only a single object, which are called \emph{reflections}. Physically speaking, one may visualise its very principle by \emph{``supposing an initial disturbance to be reflected from the boundaries involved and to produce succeedingly smaller effect with each successive reflection''} \cite[chapter 8]{Happel:1983}, hence its given name.

In a more formal manner, the method exploits the superposition principle, inherent to any linear system, to follow a divide-and-conquer strategy, assuming that each of the single-object problems is somewhat easier to solve than the many-object one. In this sense, it bears some similarities to Schwarz-type domain decomposition methods (see \cite{Gander:2008} for instance), which attempt to solve a (generally interior) boundary value problem by splitting it into several boundary value problems set on different, at most partially overlapping, subdomains and iterating to coordinate the solution between adjacent subdomains. There is, however, an important conceptual difference between these two approaches. Indeed, any single object problem to be solved in the method of reflections is defined on a (possibly unbounded) domain that is the interior of the complement of the considered object and thus contains the whole domain in which the main problem is set. This technique must therefore rather be seen as a \emph{boundary} decomposition method.

The method of reflections was historically devised with human computers in mind, as well as tractable and explicit formulas for the reflections, which implied that the objects were few and identical, with a shape such that available analytical forms of the Stokes and Fax\'en laws allowed the practical approximation of the velocity field by a truncated series. With time, these restrictions, coupled with the fact that a large number of terms may be required in the truncated expansion to obtain an accurate approximation (most notably when the objects are close to each other), made its use less attractive. Nevertheless, since the method in itself does not rely on the manner in which the boundary value problems are solved, it became clear, with the advent of electronic computers, that methods using a numerical discretization (more advantageously those based on boundary integral representation) could be employed, and even combined\footnote{One can for instance check the reference \cite{Boyer:2012}, in which different solution methods are used depending on the nature of the considered one-object problem.} with the aforementioned analytical methods, to efficiently address problems involving numerous objects of arbitrary shape, size or type of imposed boundary condition.

Despite a number of papers concerned with applications (see \cite{Ichiki:2001,Mettot:2011,Boyer:2012,Wilson:2013} for instance), it appears that the method of reflections has seldom been studied from a mathematical perspective. One must cite the pioneering work of Luke \cite{Luke:1989}, in which the convergence of the original form of the method applied to the solution of the so-called mobility problem in a bounded domain is investigated theoretically. By formulating the problem in an appropriate Hilbert space setting and interpreting the method in terms of orthogonal projection operators, Luke proved that the method always converges to the solution of the problem and that its convergence rate is linear if a geometric property between the subspaces associated with the projection operators holds. More recently, Traytak \cite{Traytak:2006} studied a variant of the method applied to the solution of a Dirichlet problem in a three-dimensional unbounded domain complementary to a set of spheres. Resorting to analytical techniques, he established necessary and sufficient conditions of convergence with respect to the radii of the spheres and the distances between their respective centres. These two important contributions notwithstanding, it appears that a fully developed mathematical theory of the method of reflections is still lacking.

Furthermore, we may add that a few missed facts have led to some misunderstandings on the part of the scientific communities in which the method is used. For instance, when Ichiki and Brady \cite{Ichiki:2001} state, citing the work of Luke, that \textit{``while the convergence of the method of reflections has been proven for the mobility problem, the convergence for the resistance problem is an open question.''} and subsequently give a ``counter-example'' to the convergence of the method for configurations involving more than two objects, they somehow undermine their discovery by failing to notice that the method of reflections considered in their work differs from the one studied by Luke. 
It was also empirically observed by Wilson \cite{Wilson:2013} that the convergence behaviour of the method may depend significantly on the type of boundary value problem considered, and Luke stated that his convergence proof does not extend to other types of boundary conditions or in the case of an unbounded domain, leaving open a number of questions. In the present work, we show that convergence can indeed be established for several kinds of boundary value problems using the same mathematical tools. By doing so, we aim at broadening the original analysis of Luke and propose a unified theoretical framework for the analysis of the existing forms of the method of reflections. 

\smallskip

The outline of the paper is the following. Using a well-known analogy between Stokesian hydrodynamics and electrostatics, we first review in Section \ref{section: presentation of the method of reflections} the different instances of the method of reflections found in the literature on a couple of toy problems involving the Laplace operator. This exposition leads to a more formal presentation of two versions of the method, applied to the solution of an abstract boundary value problem. The first form, called \emph{sequential} in the present work, appears to be the original one from an historical perspective. The second one, which we call \emph{parallel}, is, to the best of our knowledge, a variant independently introduced by Golusin in 1934 \cite{Golusin:1934} and seems to be the form of the method featured the most prominently in the literature. In Section \ref{framework}, the method is recast in a loose functional framework, which allows to derive some algebraic properties of the iterative process and to interpret it in terms of projection operators. Section \ref{section: convergence analysis} is devoted to the convergence analysis of the method in a Hilbert space setting. Unconditional convergence results  are obtained, for both the sequential form and a modification of the parallel form of the method, whenever the considered boundary value problem can be solved by the \emph{method of orthogonal projection} \cite{Weyl:1940,Vishik:1949}, and several examples of applications entering such a context are given. Finally, some numerical experiments are presented in Section \ref{Numerics}. 

\section{Two examples of applications from the literature}\label{section: presentation of the method of reflections}
In this section, we describe two forms of the method of reflections commonly found in the literature. The method is indeed typically considered in the context of hydrodynamics at low Reynolds number to solve two distinct types of problems for flows involving hydrodynamic interactions among particles, respectively called the \emph{mobility problem} and the \emph{resistance problem} (see \cite{Kim:1991} for instance). Both are boundary value problems based on the Stokes equations for an incompressible flow of a Newtonian fluid. In the former, the forces and torques are to be determined for specified particle velocities in the ambient fluid, while in the latter the particles' forces and torques are prescribed in the ambient fluid and the velocities are unknown. However, to simplify the presentation, we exploit an existing analogy between Stokesian hydrodynamics and electrostatics, recalled by Luke \cite{Luke:1989}, in which the Stokes equations are replaced by the Laplace equation, and the fluid velocity by the electrostatic potential.

\subsection{An electrostatic analogue of the mobility problem: the sequential form}\label{subsec:introMob}
The method of reflections, as introduced by Smoluchowski \cite{Smoluchowski:1911}, was studied by Luke \cite{Luke:1989} when used to solve the mobility problem in hydrodynamics. The electrostatic analogue of this problem consists in determining the electric potential in a container $\Omega$ in $\mathbb{R}^d$ ($d$ being an integer strictly greater than $2$), such that its value is constant, but unknown, on the surfaces of $N$ conducting objects $O_j\subset\Omega$, $j$ in $\{1,\dots,N\}$, and the corresponding amount of electric charge on each of these surfaces is known, that is: \emph{given $N$ real numbers $Q_j$, $j$ in $\{1,\dots,N\}$, find the scalar function $u$ and the $N$ real numbers $c_j$, $j$ in $\{1,\dots,N\}$, which verify}
\begin{align}
&-\Delta u=0\text{ in }\Omega\setminus\cup_{j=1}^N\overline{O_j},\label{Laplace equation, 4th type problem}\\
&u=c_j\text{ on }\partial O_j,\ j=1,\dots,N,\label{boundary conditions, 4th type problem}\\
&\displaystyle\int_{\partial O_j}\frac{\partial u}{\partial\vec{n}}(x)\,\mathrm{d}S(x)=Q_j,\ j=1,\dots,N,\label{surface charge conditions, 4th type problem}
\end{align}
where $\Delta$ denote the Laplace operator, $\frac{\partial u}{\partial\vec{n}}$ is the trace of the normal derivative of $u$ on the considered boundary, $\vec{n}$ being the outward-pointing unit normal vector to the boundary, and $\mathrm{d}S$ stands for the volume form on each of the considered hypersurfaces.

Such a non-local boundary condition is sometimes said to be of the \emph{fourth type\footnote{One may consider the first kind of boundary condition for the Laplace equation to be Dirichlet's, the second, Neumann's, and the third, Robin's.}} \cite[Chapter III]{Showalter:1977} or called an \emph{equivalued surface boundary condition} or a \emph{total flux boundary condition} \cite{Li:1989}. It arises in various applications (see the examples given in~\cite{Li:1989}).

Note that the domain $\Omega$ may or may not be bounded. In the latter case, one should prescribe an additional condition at infinity for the problem to be well-posed. In what follows, we assume that the container is bounded and that a homogeneous Dirichlet condition is imposed on its boundary:
\begin{equation}\label{outer Dirichlet condition, 4th type problem}
u=0\text{ on }\partial\Omega.
\end{equation}

In order to solve the resulting boundary value problem, the method of reflections generates a sequence $(u^{(k)})_{k\in\mathbb{N}}$ of approximations to the solution, starting from an initial field $u^{(0)}$ satisfying equations \eqref{Laplace equation, 4th type problem}, \eqref{surface charge conditions, 4th type problem} and \eqref{outer Dirichlet condition, 4th type problem} (but not necessarily equation \eqref{boundary conditions, 4th type problem}), and next cyclically correcting the boundary value of the approximation on each of the boundaries of the set of objects. Such a chore is achieved through the introduction of auxiliary fields\footnote{The algorithm presented here is a reformulation of the method analysed in \cite{Luke:1989}, as the auxiliary fields do not appear explicitly in the reference.} (the so-called \emph{reflections}), defined recursively as the solutions of single-object problems. For the problem at hand, these take the form of the sequences of functions $(u^{(k)}_i)_{k\in\mathbb{N}^*}$ and scalars $(c^{(k)}_i)_{k\in\mathbb{N}^*}$, with $i$ in $\{1,\dots,N\}$, such that
\[
\forall i\in\{1,\dots,N\},\ \left\{\begin{array}{l}
-\Delta u^{(1)}_i=0\text{ in }\Omega\setminus\overline{O_i},\\
u^{(1)}_i=c^{(1)}_i-u^{(0)}-\sum_{j=1}^{i-1}u^{(1)}_j\text{ on }\partial O_i,\\
\displaystyle\int_{\partial O_i}\frac{\partial u^{(1)}_i}{\partial\vec{n}}(x)\,\mathrm{d}S(x)=0,\\
u^{(1)}_i=0\text{ on }\partial\Omega,
\end{array}\right.
\]
and
\[
\forall k\in\mathbb{N}^*,\ \forall i\in\{1,\dots,N\},\ \left\{\begin{array}{l}
-\Delta u^{(k+1)}_i=0\text{ in }\Omega\setminus\overline{O_i},\\
u^{(k+1)}_i=c^{(k+1)}_i-\sum_{j=1}^{i-1}u^{(k+1)}_j-\sum_{j=i+1}^{N}u^{(k)}_j\text{ on }\partial O_i,\\
\displaystyle\int_{\partial O_i}\frac{\partial u^{(k+1)}_i}{\partial\vec{n}}(x)\,\mathrm{d}S(x)=0,\\
u^{(k+1)}_i=0\text{ on }\partial\Omega.
\end{array}\right.
\]
The approximation of the solution after the $\ell$th cycle of the method is then defined by summing the restrictions of the reflections to $\Omega\setminus\cup_{j=1}^N\overline{O_j}$ as
follows,
\begin{equation}\label{lth cycle approximation}
u^{(\ell)}=u^{(0)}+\sum_{k=1}^\ell\sum_{i=1}^Nu^{(k)}_i\text{ in }\Omega\setminus\cup_{j=1}^N\overline{O_j}.
\end{equation}

We observe that the imposed surface charges are taken into account in the initial approximation $u^{(0)}$ and never subsequently modified by the reflections, which only aim at correcting the boundary values one at a time. The practical construction of such a first approximation does not violate the single-object paradigm of the method, since this field may be obtained as the sum of $N$ solutions to problems of that type by considering each conducting object separately. We also note that the boundary datum for each of the single-object problems defined above depends at a given cycle on quantities computed during both the current cycle and the previous one (or on the initial approximation during the first cycle), which necessarily implies \emph{successive} computations of their respective solutions, in the spirit of the Gauss--Seidel method to solve linear systems of equations. Hence, we call hereafter this procedure the \emph{sequential form} of the method of reflections.

Luke \cite{Luke:1989} showed the unconditional convergence of this version of the method of reflections applied to solve the mobility problem for the Stokes equations (see Subsection \ref{appl:Luke}). Similar boundary decomposition methods were proposed by Balabane and Tirel \cite{Balabane:1997} to solve the Helmholtz equation outside a union of obstacles without trapping rays, and by Coatl\'even and Joly \cite{Coatleven:2012} to effectively solve operator factorized forms of time-harmonic multiple-scattering problems in periodic media.

\subsection{An electrostatic analogue of the resistance problem: the parallel form}\label{subsec:introRes}
From a mathematical standpoint, the so-called resistance problem of hydrodynamics is a boundary value problem for the Stokes equations with non-homogeneous Dirichlet boundary conditions. Its electrostatic analogue thus amounts to determining the electric potential in a container $\Omega$, its values on the surfaces of $N$ conducting objects $O_j\subset\Omega$, $j$ in $\{1,\dots,N\}$, being known, that is: \emph{given $N$ scalar functions $U_j$, $j$ in $\{1,\dots,N\}$, respectively defined on $\partial O_j$, $j$ in $\{1,\dots,N\}$, find the scalar function $u$ satisfying}
\begin{align}
&-\Delta u=0\text{ in }\Omega\setminus\cup_{j=1}^N\overline{O_j},\label{Laplace equation, Dirichlet problem}\\
&u=U_j\text{ on }\partial O_j,\ j=1,\dots,N.\label{boundary conditions, Dirichlet problem}
\end{align}
As with the previous example, we assume that the domain\begin{flushright}

\end{flushright} $\Omega$ is bounded and impose a homogeneous Dirichlet condition on its boundary to complete the problem,
\begin{equation}\label{outer Dirichlet condition, Dirichlet problem}
u=0\text{ on }\partial\Omega.
\end{equation}

The second form of the method of reflections hinges on a different construction of the sequence of approximations $(u^{(k)})_{k\in\mathbb{N}}$. While the initial approximation $u^{(0)}$ has to satisfy equations \eqref{Laplace equation, Dirichlet problem} and \eqref{outer Dirichlet condition, Dirichlet problem} (but not necessarily equation \eqref{boundary conditions, Dirichlet problem}) and the subsequent approximations are still given by the sum \eqref{lth cycle approximation}, the auxiliary fields $(u_i^{(k)})_{k\in\mathbb{N}^*}$, with $i$ in $\{1,\dots,N\}$, are now defined as the respective solutions to the following single-object problems
\[
\forall i\in\{1,\dots,N\},\ \left\{\begin{array}{l}
-\Delta u^{(1)}_i=0\text{ in }\Omega\setminus\overline{O_i},\\
u^{(1)}_i=U_i-u^{(0)}\text{ on }\partial O_i,\\
u^{(1)}_i=0\text{ on }\partial\Omega,
\end{array}\right.
\]
and
\[
\forall k\in\mathbb{N}^*,\ \forall i\in\{1,\dots,N\},\ \left\{\begin{array}{l}
-\Delta u^{(k+1)}_i=0\text{ in }\Omega\setminus\overline{O_i},\\
u^{(k+1)}_i=-\sum_{j=1}^{i-1}u^{(k)}_j-\sum_{j=i+1}^Nu^{(k)}_j\text{ on }\partial O_i,\\
u^{(k+1)}_i=0\text{ on }\partial\Omega.
\end{array}\right.
\]
We observe here that the datum for each of the above problems depends at a given cycle on quantities computed during the previous cycle (and on the boundary data for the first cycle). The solutions of these problems may thus be carried out \emph{simultaneously}, like in the Jacobi method for the iterative solution of linear systems of equations, leading us to refer to this procedure as the \emph{parallel form} of the method of reflections.

This version of the method of reflections was devised by Golusin \cite{Golusin:1934} to constructively solve the Dirichlet problem for the Laplace equation in multiply connected circular domains. It appears to be the most commonly found in the literature (see for instance the presentation in subsection 214 of the textbook \cite{Smirnov:1964}). Concerning its convergence, Happel and Brenner \cite{Happel:1983} wrote\footnote{The version of the method presented in Chapter 6 of \cite{Happel:1983} differs slightly from the above procedure by focusing on one specific object. The reflections with respect to this object are then computed as in the sequential form of the method, whereas the reflections with respect to the other $N-1$ objects are computed as in the parallel form.} 
 that: \textit{``it must be pointed out that no rigorous proof exists that the iteration scheme converges to the desired solution.''}

As a matter of fact, a numerical example of divergence for a particular configuration is presented by Ichiki and Brady \textcolor{black}{in} \cite{Ichiki:2001}, the method being \textcolor{black}{there} used to solve the Stokes resistance problem in the presence of rigid spherical particles in an unbounded domain. The solutions of the single particle problems are numerically approximated by truncated multipole expansions, but the authors conjecture that the observed divergence is unrelated to the order of truncation of the expansions.

In \cite{Traytak:2006}, Traytak analysed the method applied to the solution of a Dirichlet problem for the Laplace equation in the unbounded complement of a set of spheres. \textcolor{black}{He} obtained necessary and sufficient conditions for its convergence and exhibited simple cases of divergence when the number of spheres is greater than or equal to eight.
 
More recently, conditional convergence results for the method were derived by H\"ofer and Vel\'azquez \cite{Hofer:2018} for an \emph{infinite} number of spherical particles, allowing for new proofs of classical homogenization results for the Dirichlet problem for both the Poisson and Stokes equations in perforated domains (see also \cite{Hoefer:2018,Niethammer} which both deal with the mobility problem in a similar context).

This version of the method was also used by Jabin and Otto \cite{Jabin:2004} to identify the dilute regime of a cloud of sedimenting particles, in which the particles do not significantly interact and sink as if they were isolated. Additionally, it may be seen that the boundary decomposition technique introduced\footnote{In the same work, a sufficient condition for convergence, depending on the frequency, the diameters and the areas of the sub-scatterers, as well as the respective distances between the sub-scatterers, is established.} by Balabane \cite{Balabane:2004} (see also \cite{Ganesh:2009,Wang:2013} for practical and numerical applications) for solving a boundary value problem involving the Helmholtz equation in the unbounded complement of a scatterer made of a union of disjoint sub-scatterers is identical to this parallel form of the method. Connections between the principle of this instance of the method and the approach known as the Foldy--Lax model\footnote{To see this, one can compare the hierarchy of different levels of approximation given in \cite{Cassier:2013}, ranging from the Born approximation to the Foldy--Lax model, with the sequence of approximations produced by the method of reflections.} \cite{Foldy:1945,Lax:1951,Lax:1952} or the generalised Born series technique~\cite{Schuster:1985} for the multiple scattering of waves may also be pointed out.

\section{General formulations}\label{framework}
In this section, we summarize and interpret the application of both forms of the method of reflections to the solution of an abstract linear boundary value problem. We want to emphasize that the principle of the method is very general and, because of this generality, the present treatment is necessarily formal. As a consequence, we deliberately refrain from specifying a particular functional framework in order to focus on the algebraic aspects of the method. A rigorous setting is provided in Section \ref{section: convergence analysis}, with explicit examples given in Subsection \ref{practical cases}.

\smallskip

We consider a simply connected regular open subset $\Omega$ of $\mathbb{R}^d$, and, given a positive integer $N$, a family of arbitrarily numbered ``objects'' or ``holes'', which are disjoint subsets $O_j\subset\Omega$, with $j$ in $\{1,\dots,N\}$, in the sense that they are bounded, simply connected, open sets with smooth boundaries such that their closures are non-overlapping. The open set $\Omega\setminus\cup_{j=1}^N\overline{O_j}$ is then called a \emph{perforated} domain. We are interested in solving the following non-homogeneous boundary value problem: \textit{find a field $u$ satisfying
\begin{eqnarray}
\label{BVP main equation}&&\mathcal{L}u=f\text{ in }\Omega\setminus\cup_{j=1}^N\overline{O_j}\\
\label{BVP main boundary conditions}&&\mathcal{B}_ju=b_j
,\ j=1,\dots,N,
\end{eqnarray}
where $\mathcal{L}$ is a linear differential operator, acting on functions defined in $\Omega$, each operator $\mathcal{B}_j$, with $j$ in $\{1,\dots,N\}$, is a linear operator acting, possibly in a non-local manner, on functions defined on the boundary $\partial O_j$ of the $j$th object, and the functions $f$ and $b_j$, with  $j$ in $\{1,\dots,N\}$, respectively defined on $\Omega$ and $\partial O_j$, with  $j$ in $\{1,\dots,N\}$, are the data of the boundary value problem.} In the applications of the method previously recalled, the operator $\mathcal{L}$ is typically of elliptic type, but this is not necessarily the case (see \cite{Balabane:2004} for instance).

\begin{remark}\label{rem:farbitrary}
Note that the function $f$ is defined over the whole of $\Omega$, while the problem itself is defined over the perforated domain $\Omega\setminus\cup_{j=1}^N\overline{O_j}$, as customary in problems dealing with homogenisation. In the same way, the functions $b_j$, with $j$ in $\{1,\dots,N\}$, forming the rest of the data are usually related to a single function, globally defined over $\Omega$. 
We will make such an assumption in Subsection~\ref{projection setting}.
\end{remark}

If the domain $\Omega$ is bounded, a homogeneous condition on the boundary $\partial\Omega$ is added to the above system of equations. In the case where it is unbounded, this condition is replaced, or complemented, by one set at infinity. Either way, this boundary condition is denoted
\begin{equation}\label{BVP additional condition}
\mathcal{B}_0u=0,
\end{equation}
and it is assumed that the system \eqref{BVP main equation}-\eqref{BVP additional condition} defines a \emph{well-posed} (in the sense of Hadamard) boundary value problem in an appropriate function space, \textit{i.e.}, there exists a unique solution $u^*$ to the problem, which belongs to this space and depends continuously on the data. This implies in particular that the operators $\mathcal{B}_j$, with $j$ in $\{1,\dots,N\}$, verify certain admissibility conditions with respect to the operator $\mathcal{L}$ (see \cite{Lions:1972}).

\smallskip

To solve this boundary value problem using the sequential form of the method of reflections, one begins by computing an initial approximation $u^{(0)}$ satisfying
\begin{equation}\label{definition u0}
\left\{\begin{aligned}
&\mathcal{L}u^{(0)}=f\text{ in }\Omega,\\
&\mathcal{B}_0u^{(0)}=0.
\end{aligned}\right.
\end{equation}
An iterative (or cyclic) phase then follows, in which one solves recursively the single-object problems associated with the reflections given by
\begin{equation}\label{initseq}\
\forall i\in\{1,\dots,N\},\ \left\{\begin{aligned}
&\mathcal{L} u_i^{(1)}=0\text{ in }\Omega\setminus\overline{O_i},\\
&\mathcal{B}_iu_i^{(1)}=b_i-\mathcal{B}_i\left(u^{(0)}+\sum_{j=1}^{i-1}u_j^{(1)}\right)
,\\
&\mathcal{B}_0u_i^{(1)}=0,
\end{aligned}\right. 
\end{equation}
and
\begin{equation}\label{iterseq}
\forall k\in\mathbb{N}^*,\ \forall i\in\{1,\dots,N\},\ \left\{\begin{aligned}
&\mathcal{L} u_i^{(k+1)}=0\text{ in }\Omega\setminus\overline{O_i},\\
&\mathcal{B}_iu_i^{(k+1)}=-\mathcal{B}_i\left(\sum_{j=1}^{i-1}u_j^{(k+1)}+\sum_{j=i+1}^Nu_j^{(k)}\right)
,\\
&\mathcal{B}_0u_i^{(k+1)}=0,
\end{aligned}\right.
\end{equation}
in order to update the approximation of the solution using the formula
\begin{equation}\label{update}
\forall k\in\mathbb{N},\ u^{(k+1)}=u^{(k)}+\sum_{i=1}^N{u_i^{(k+1)}}\text{ in }\Omega\setminus\cup_{j=1}^N\overline{O_j}.
\end{equation}
The approximate solution after $k$ cycles is thus defined by a finite double sum
\begin{equation}\label{approximate solution}
\forall k\in\mathbb{N},\ u^{(k)}=u^{(0)}+\sum_{\ell=1}^k\sum_{i=1}^N{u_i^{(\ell)}}\text{ in }\Omega\setminus\cup_{j=1}^N\overline{O_j}.
\end{equation}

\medskip

We next turn to the parallel version of the method of reflections. In this case, while the initialisation is required to satisfy \eqref{definition u0} and the update of the approximate solution is given by~\eqref{update}, the collections of problems to be solved during the cycling phase are
\begin{equation}\label{initpar}
\forall i\in\{1,\dots,N\},\ \left\{\begin{aligned}
&\mathcal{L} u_i^{(1)}=0\text{ in }\Omega\setminus\overline{O_i},\\
&\mathcal{B}_iu_i^{(1)}=b_i-\mathcal{B}_iu^{(0)}
,\\
&\mathcal{B}_0u_i^{(1)}=0.
\end{aligned}\right. 
\end{equation}
and
\begin{equation}\label{iterpar}
\forall k\in\mathbb{N}^*,\ \forall i\in\{1,\dots,N\},\ \left\{\begin{aligned}
&\mathcal{L} u_i^{(k+1)}=0\text{ in }\Omega\setminus\overline{O_i},\\
&\mathcal{B}_iu_i^{(k+1)}=-\mathcal{B}_i\left(\sum_{j=1}^{i-1}u_j^{(k)}+\sum_{j=i+1}^Nu_j^{(k)}\right)
,\\
&\mathcal{B}_0u_i^{(k+1)}=0.
\end{aligned}\right. 
\end{equation}

\medskip

We observe that both versions of the method of reflections are well-defined as soon as the no-object problem \eqref{definition u0} and the respective single-object problems \eqref{initseq} and \eqref{iterseq}, or \eqref{initpar} and \eqref{iterpar}, are well-posed. Moreover, the method is said to converge if the sequence of functions defined by \eqref{approximate solution} has a limit as the number of achieved cycles $k$ tends to infinity.

\begin{remark}
In some cases, the well-posedness of the single-object problems to be solved during the cyclic phase is a direct consequence of the well-posedness of the many-object boundary value problem one wants to solve with the method. However, considering for instance a non-homogeneous pure Neumann problem for the Laplace operator, one easily sees that the compatibility conditions satisfied by the data of the problem could generally prevent similar conditions to be satisfied by the data of the sub-problems stemming from the boundary decomposition, leading to ill-posed boundary value problems when applying the method. As a consequence, the method of reflections is not applicable to \emph{any} linear boundary value problem, since the decomposition paradigm on which it is based may not be valid.
\end{remark}

\subsection{Algebraic properties of the reflections}\label{subsec:Alg}
The sequences of reflections $(u_i^{(k+1)})_{k\in\mathbb{N}}$, with $i$ in $\{1,\dots,N\}$, constructed by both forms of the method may be viewed as sequences of partial correctors to the sequence of approximate solutions $(u^{(k)})_{k\in\mathbb{N}}$ in the following sense. 

\begin{lemma}\label{correction formula lemma}
Suppose boundary value problem \eqref{BVP main equation}-\eqref{BVP additional condition} admits a lifting of its boundary data, \textit{i.e.}, there exists a function $\lifting$ defined on $\Omega$ such that $\mathcal{B}_jg=b_j$, $j=0,\dots,N$, and that the $N$ sequences $(u^{(k+1)}_j)_{k\in\mathbb{N}}$, $j=1,\dots,N$, produced either by the sequential or by the parallel form of the method of reflections are well defined. Then, one has
\begin{equation}\label{correction formula sequential}
\forall k\in\mathbb{N},\ \forall i\in\{1,\dots,N\},\ \mathcal{B}_i\left(u^{(k)}+\sum_{j=1}^iu_j^{(k+1)}\right)=\mathcal{B}_i\lifting
,
\end{equation}
for the sequential form, and
\begin{equation}\label{correction formula parallel}
\forall k\in\mathbb{N},\ \forall i\in\{1,\dots,N\},\ \mathcal{B}_i\left(u^{(k)}+u_i^{(k+1)}\right)=\mathcal{B}_ig
,
\end{equation}
for the parallel form, where the sequence $(u^{(k)})_{k\in\mathbb{N}}$ is given by \eqref{approximate solution}.
\end{lemma}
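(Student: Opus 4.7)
The plan is to prove both identities by induction on the cycle index $k \in \mathbb{N}$, with the key algebraic input being the very definitions \eqref{initseq}--\eqref{iterseq} (resp.\ \eqref{initpar}--\eqref{iterpar}) of the reflections, rewritten as identities involving the operator $\mathcal{B}_i$ applied to partial sums, together with the update formula \eqref{update}. Throughout, the hypothesis that $g$ is a lifting of the boundary data lets us replace every occurrence of $b_i$ by $\mathcal{B}_i g$, which is what makes the right-hand sides of \eqref{correction formula sequential} and \eqref{correction formula parallel} appear.

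For the base case $k=0$, I would simply rearrange the boundary equations of \eqref{initseq} and \eqref{initpar}. In the sequential case, moving $\mathcal{B}_i(u^{(0)} + \sum_{j=1}^{i-1} u_j^{(1)})$ to the left immediately yields $\mathcal{B}_i(u^{(0)}+\sum_{j=1}^i u_j^{(1)}) = b_i = \mathcal{B}_i g$; in the parallel case, the analogous rearrangement yields $\mathcal{B}_i(u^{(0)}+u_i^{(1)}) = \mathcal{B}_i g$. No additional idea is required here.

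For the inductive step (at $k \geq 1$), the strategy is to expand $u^{(k)}$ via the update formula \eqref{update} as $u^{(k)} = u^{(k-1)} + \sum_{j=1}^N u_j^{(k)}$, split the resulting sum according to whether $j \leq i$ or $j \geq i+1$ (sequential case) or whether $j=i$ or $j \neq i$ (parallel case), and then regroup the terms into two pieces: one piece of the form $u^{(k-1)} + (\text{partial sum at level }k)$, to which the inductive hypothesis applies, and one piece whose image under $\mathcal{B}_i$ vanishes by the boundary equation of \eqref{iterseq} (resp.\ \eqref{iterpar}). Applying $\mathcal{B}_i$ term by term and using linearity, both pieces combine to give $\mathcal{B}_i g + 0 = \mathcal{B}_i g$, closing the induction.

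No real obstacle is anticipated: the argument is a bookkeeping manipulation of finite sums once one observes that the defining boundary equation of each reflection is exactly the statement that the image under $\mathcal{B}_i$ of the sum of $u_i^{(k+1)}$ with the appropriate previously-computed reflections is zero (or $b_i$ for the very first cycle). The only care needed is to match indices correctly when combining the partial sum at level $k+1$ with the leftover terms at level $k$ coming from expanding $u^{(k)}$, so that the regrouping into a ``previous cycle'' block and a ``defining equation'' block is clean.
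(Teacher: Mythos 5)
Your proposal is correct and follows essentially the same route as the paper's proof: induction on $k$, with the base case obtained by rearranging the boundary equations of \eqref{initseq} (resp.\ \eqref{initpar}), and the inductive step obtained by expanding $u^{(k)}$ via \eqref{update} and regrouping into a block handled by the induction hypothesis plus a block annihilated by $\mathcal{B}_i$ thanks to \eqref{iterseq} (resp.\ \eqref{iterpar}). No discrepancy to report.
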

\begin{proof}
Both equalities are easily established by induction. First, for the sequential form, it stems from the linearity of the operators, problem \eqref{initseq}, and the definition of $\lifting$ that, for the base case $k=0$ and any integer $i$ in $\left\{1,\dots,N\right\}$,
\[
\mathcal{B}_i\left(u^{(0)}+\sum_{j=1}^iu_j^{(1)}\right)=\mathcal{B}_i\left(u^{(0)}+\sum_{j=1}^{i-1}u_j^{(1)}+u_i^{(1)}\right)=\mathcal{B}_i\left(u^{(0)}+\sum_{j=1}^{i-1}u_j^{(1)}-u^{(0)}-\sum_{j=1}^{i-1}u_j^{(1)}\right)+b_i=b_i=\mathcal{B}_i\lifting\text{ on }\partial O_i.
\]
Assuming that the equality
\[
\mathcal{B}_i\left(u^{(k-1)}+\sum_{j=1}^iu_j^{(k)}\right)=b_i,
\]
holds for some positive integer $k$ and any integer $i$ in $\left\{1,\dots,N\right\}$, one has, according to identity \eqref{update} and problem~\eqref{iterseq},
\begin{align*}
\mathcal{B}_i\left(u^{(k)}+\sum_{j=1}^iu_j^{(k+1)}\right)&=
\mathcal{B}_i\left(u^{(k-1)}+\sum_{j=1}^Nu_j^{(k)}+\sum_{j=1}^iu_j^{(k+1)}\right)\\
&=\mathcal{B}_i\left(u^{(k-1)}+\sum_{j=1}^iu_j^{(k)}+\sum_{j=1}^iu_j^{(k+1)}+\sum_{j=i+1}^Nu_j^{(k)}\right)\\
&=\mathcal{B}_i \left(u^{(k-1)}+\sum_{j=1}^iu_j^{(k)}\right).
\end{align*}
Likewise, for the parallel form, one has, for any integer $i$ in $\left\{1,\dots,N\right\}$,
\[
\mathcal{B}_i\left(u^{(0)}+u_i^{(1)}\right)=\mathcal{B}_iu^{(0)}+b_i-\mathcal{B}_iu^{(0)}=b_i
,
\]
using the linearity of the operators, problem \eqref{initpar}, the definition of $\lifting$, and assuming that
\[
\mathcal{B}_i\left(u^{(k-1)}+u_i^{(k)}\right)=\mathcal{B}_i\lifting
\]
is satisfied for some integer positive $k$ and any integer $i$ in $\left\{1,\dots,N\right\}$, one finds that  
\begin{align*}
\mathcal{B}_i\left(u^{(k)}+u_i^{(k+1)}\right)&=\mathcal{B}_i\left(u^{(k-1)}+\sum_{j=1}^Nu_j^{(k)}+u_i^{(k+1)}\right)\\
&=\mathcal{B}_i\left(u^{(k-1)}+\sum_{j=1}^Nu_j^{(k)}-\sum_{j=1}^{i-1}u_j^{(k)}-\sum_{j=i+1}^Nu_j^{(k)}\right)\\
&=\mathcal{B}_i\left(u^{(k-1)}+u_i^{(k)}\right),
\end{align*}
using identity \eqref{update} and problem \eqref{iterpar}, which ends the proof.
\end{proof}

\begin{remark}
An analogue of formula \eqref{correction formula parallel} was derived by H\"ofer and Vel\'azquez (see equation (28) in~\cite{Hofer:2018}) in the case of the screened Poisson equation with homogeneous Dirichlet boundary conditions and an \emph{infinite} number of objects.
\end{remark}

To get a result on the possible limit of sequence~\eqref{approximate solution}, a minimal functional framework needs to be specified. Namely, we assume from now on that problem~\eqref{BVP main equation}-\eqref{BVP additional condition} is set in a Banach space.

\begin{proposition}\label{prop:banach}
If the sequence of functions, defined by \eqref{approximate solution} and produced by either form of the method of reflections, converges (in the sense of an absolutely convergent series), then it is to the solution $u^*$ to boundary value problem \eqref{BVP main equation}-\eqref{BVP additional condition}.
\end{proposition}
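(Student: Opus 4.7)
The plan is to show that the limit $u^\infty$ of the sequence $(u^{(k)})_{k \in \mathbb{N}}$, which is assumed to exist in the Banach space, satisfies the equations \eqref{BVP main equation}--\eqref{BVP additional condition} defining the boundary value problem; since the latter is well-posed, this forces $u^\infty = u^*$.

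First I would verify that each partial sum $u^{(k)}$ satisfies the interior equation and the additional boundary condition. From the initialization \eqref{definition u0}, one has $\mathcal{L} u^{(0)} = f$ on all of $\Omega$, hence in particular on the perforated domain $\Omega \setminus \cup_{j=1}^N \overline{O_j}$, and from \eqref{initseq}--\eqref{iterseq} (respectively \eqref{initpar}--\eqref{iterpar}) one has $\mathcal{L} u_i^{(\ell)} = 0$ in $\Omega \setminus \overline{O_i}$, which contains the perforated domain. Likewise, $\mathcal{B}_0 u^{(0)} = 0$ and $\mathcal{B}_0 u_i^{(\ell)} = 0$ for every $i$ and $\ell$. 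Summing according to \eqref{approximate solution} and invoking linearity yields $\mathcal{L} u^{(k)} = f$ in $\Omega \setminus \cup_{j=1}^N \overline{O_j}$ and $\mathcal{B}_0 u^{(k)} = 0$ for every $k$. Passing to the limit, these two identities transfer to $u^\infty$ by continuity of the operators $\mathcal{L}$ and $\mathcal{B}_0$ on the underlying Banach space.

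Next I would handle the main boundary conditions on each $\partial O_i$ using Lemma \ref{correction formula lemma}, which is the place where the two forms of the method enter asymmetrically. Under the hypothesis of absolute convergence of the double series $\sum_{\ell \ge 1} \sum_{i=1}^N u_i^{(\ell)}$, the norms of the individual reflections satisfy $\|u_i^{(\ell)}\| \to 0$ as $\ell \to \infty$ for each fixed $i \in \{1,\dots,N\}$. For the parallel form, identity \eqref{correction formula parallel} reads
\begin{equation*}
\mathcal{B}_i\bigl(u^{(k)} + u_i^{(k+1)}\bigr) = \mathcal{B}_i \lifting = b_i,
\end{equation*}
and letting $k \to \infty$ gives $\mathcal{B}_i u^\infty = b_i$ by continuity of $\mathcal{B}_i$. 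For the sequential form, the analogous passage to the limit in \eqref{correction formula sequential} uses that each of the $i$ terms $u_j^{(k+1)}$ tends to zero in norm; the case $i = N$ combined with \eqref{update} yields $\mathcal{B}_N u^\infty = b_N$, and then applying \eqref{correction formula sequential} with smaller values of $i$ gives $\mathcal{B}_i u^\infty = b_i$ for each $i \in \{1,\dots,N\}$.

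Putting these pieces together, $u^\infty$ solves the full system \eqref{BVP main equation}--\eqref{BVP additional condition}, so by the assumed well-posedness we conclude $u^\infty = u^*$. The only non-trivial point is the validity of the limit arguments, that is, the continuity of $\mathcal{L}$, $\mathcal{B}_0$ and the $\mathcal{B}_i$'s on the Banach space carrying the iterates; this is precisely what is packaged into the assumption that problem \eqref{BVP main equation}--\eqref{BVP additional condition} is set in an appropriate Banach space, and it is the step that will need to be checked (or made explicit) in each concrete instance of the method.
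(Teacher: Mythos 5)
Your proposal is correct and follows essentially the same route as the paper's proof: verify that each iterate satisfies the interior equation and the outer condition and pass to the limit by continuity, then use Lemma \ref{correction formula lemma} together with the fact that absolute convergence forces the individual reflections to vanish in the limit to recover the boundary conditions on each $\partial O_i$. Your extra two-step argument for the sequential form (first $i=N$, then smaller $i$) is unnecessary — one can pass to the limit in \eqref{correction formula sequential} directly for each $i$ since every term $u_j^{(k+1)}$ tends to zero — but this is a stylistic point, not a gap.
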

\begin{proof}
From problems \eqref{definition u0}, \eqref{initseq} and \eqref{iterseq} (resp. \eqref{initpar} and \eqref{iterpar}), and formula \eqref{approximate solution}, it is easily seen that any element of the sequence $(u^{(k)})_{k\in\mathbb{N}}$, produced by the sequential (resp. parallel) form of the method of reflections, satisfies
\[
\left\{\begin{aligned}
&\mathcal{L}u^{(k)}=f\text{ in }\Omega\setminus\cup_{j=1}^N\overline{O_j},\\
&\mathcal{B}_0u^{(k)}=0,
\end{aligned}\right.
\]
and thus, by continuity, so does the limit of this sequence. Next, using that the convergence implies that
\[
\forall i\in\{1,\dots,N\},\ \lim_{k\to+\infty}u^{(k)}_i=0,
\]
it follows from passing to the limit in $k$ in \eqref{correction formula sequential} (resp. \eqref{correction formula parallel}) that
\[
\forall i\in\{1,\dots,N\},\ \lim_{k\to+\infty}\mathcal{B}_iu^{(k)}=b_i,
\]
so that the limit of the sequence indeed solves problem \eqref{BVP main equation}-\eqref{BVP additional condition}.
\end{proof}

\begin{remark}
For any integer $i$ in $\{1,\dots,N\}$, the partial sum $\sum_{\ell=0}^k{u_i^{(\ell)}}$ represents the contribution of the $i$th object to the approximate solution after $k$ cycles. As the convergence of the method entails the convergence of the sequences of partial sums, the method of reflections leads to a constructive and natural (\textit{i.e.}, with respect to the object boundaries) decomposition result for the solution to the boundary value problem, similar to the one obtained in Theorem 1 in \cite{Balabane:2004}. The existence of such a decomposition, which can be viewed as a specific application of the superposition principle, suggests that the method is best combined with numerical methods based on an integral representation formula. The basic ingredient in this formalism is the so-called Green function of the governing linear differential operator, which also allows to compute the initial approximation as a volume potential (also called a Newton potential in the case of the Laplace equation) and the subsequent reflections as surface (single-layer and/or double-layer) potentials.
\end{remark}

\subsection{Projection setting}\label{projection setting}
Following Luke \cite{Luke:1989} and H\"ofer and Vel\'azquez \cite{Hofer:2018,Hoefer:2018}, we propose an interpretation of the method of reflections in terms of projection operators. In order to properly define such operators, the problem to be solved and the various subproblems considered by the method, all defined on different subsets of $\Omega$, first need to be extended to the whole of $\Omega$, by the adjunction of problems defined in the interior of the objects and the use of transmission conditions across the boundaries of the objects. In what follows, we will abuse the notation by denoting in the same way the functions defined on $\Omega\setminus\cup_{j=1}^N\overline{O_j}$ (the reflection $u_i^{(k)}$, the approximation $u^{(k)}$ and the solution $u^*$)  and their respective extensions.

Let $H$ be a function space defined over the set $\Omega$ in which the resulting exterior-interior transmission problems and the initialisation problem \eqref{definition u0} are well-posed. For any function $u$ in $H$, consider the functions $v$ and $w$ in $H$, such that $u=v+w$, with $w$ satisfying the following problem in the exterior of the objects,
\begin{equation}\label{exterior problem for w}
\left\{\begin{aligned}
&\mathcal{L}w=\mathcal{L}u\text{ in }\Omega\setminus\cup_{j=1}^N\overline{O_j},\\
&\mathcal{B}_jw=0
,\ j=1,\dots,N,\\
&\mathcal{B}_0w=0,
\end{aligned}\right.
\end{equation}
the following equation in the interior of the objects,
\begin{equation}\label{interior problem for w}
\mathcal{L}w=0\text{ in }\cup_{j=1}^NO_j,
\end{equation}
and some chosen transmission conditions across the boundaries of the objects, which depend on the differential operator $\mathcal{L}$ and are such that a solution to the resulting exterior-interior transmission problem \eqref{exterior problem for w}-\eqref{interior problem for w} exists and is uniquely defined in $H$.
The functions $v$ and $w$ defined in this way are unique and depend continuously on $u$. It then follows that the mappings $u\mapsto v$ and $u\mapsto w$ from $H$ to $H$ are bounded. Since they also are idempotent by construction, they are continuous linear projection operators, with respective closed ranges $V$ and $M$ such that
\begin{equation}\label{direct sum decomposition}
H=V\oplus M.
\end{equation}
In what follows, we shall assume that boundary value problem \eqref{BVP main equation}-\eqref{BVP additional condition} admits a lifting of its boundary data, \textit{i.e.}, there exists a function $\lifting$ defined on $\Omega$ such that $\mathcal{B}_ig=b_i$ on $\partial O_i$, for any integer $i$ in $\{0,\dots,N\}$.

\begin{proposition}\label{proposition solution by projections}
Suppose that problem \eqref{BVP main equation}-\eqref{BVP additional condition} admits a lifting $\lifting$ in $H$ of its boundary data, that there exists an initial approximation $u^{(0)}$ in $H$ satisfying \eqref{definition u0}, and that the multiple-object exterior-interior transmission problem \eqref{exterior problem for w}-\eqref{interior problem for w} is well-posed in $H$ for any choice of $u$ in $H$, so that, in particular, decomposition \eqref{direct sum decomposition} holds. Then, the solution to problem \eqref{BVP main equation}-\eqref{BVP additional condition} is given by the restriction to $\Omega\setminus\cup_{j=1}^N\overline{O_j}$ of 
\[
u^*=P_V\lifting+P_Mu^{(0)},
\]
where $P_V$ denotes the projection from $H$ onto $V$ along $M$, and $P_M$ denotes the projection from $H$ onto $M$ along $V$.
\end{proposition}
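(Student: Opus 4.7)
My plan is to introduce the candidate function $\tilde u = P_V \lifting + P_M u^{(0)}$ in $H$ and then verify that its restriction to $\Omega\setminus\cup_{j=1}^N\overline{O_j}$ satisfies equations \eqref{BVP main equation}, \eqref{BVP main boundary conditions} and \eqref{BVP additional condition}; the well-posedness of the boundary value problem would then force $u^* = \tilde u$ on the perforated domain.

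To carry this out, I would first extract two simple algebraic facts from the construction of the decomposition $H = V \oplus M$. The first is that every $v \in V$ satisfies $\mathcal{L}v = 0$ on $\Omega\setminus\cup_{j=1}^N\overline{O_j}$: writing $v = u - w$ for some $u \in H$, where $w$ solves the transmission problem \eqref{exterior problem for w}--\eqref{interior problem for w}, the identity $\mathcal{L}w = \mathcal{L}u$ in the perforated domain combined with the linearity of $\mathcal{L}$ gives the claim. The second is that every $w \in M$ satisfies $\mathcal{B}_j w = 0$ for all $j \in \{0,\dots,N\}$, directly from \eqref{exterior problem for w}.

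With these two facts in hand, the verification reduces to a short computation. For the equation, I would use that $P_V u^{(0)} \in V$ implies $\mathcal{L} P_V u^{(0)} = 0$ in the perforated domain, so that $\mathcal{L} P_M u^{(0)} = \mathcal{L} u^{(0)} - \mathcal{L} P_V u^{(0)} = f$ there by \eqref{definition u0}; adding $\mathcal{L} P_V \lifting = 0$ then yields $\mathcal{L}\tilde u = f$ on $\Omega\setminus\cup_{j=1}^N\overline{O_j}$. For the boundary conditions, for every $j \in \{0,\dots,N\}$ I would write $P_V \lifting = \lifting - P_M \lifting$ and use $\mathcal{B}_j P_M \lifting = \mathcal{B}_j P_M u^{(0)} = 0$ to get $\mathcal{B}_j \tilde u = \mathcal{B}_j \lifting = b_j$ (with the convention $b_0 = 0$ for the outer condition). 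Uniqueness of the solution to the boundary value problem then delivers the identification.

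I do not expect a genuine obstacle here: all the substantive work has been absorbed into the definitions of $V$, $M$ and $H$ and into the standing well-posedness assumption for the exterior-interior transmission problem, so the argument is essentially a bookkeeping exercise on the direct sum decomposition. The only conceptual point worth emphasizing is the complementary roles played by $\lifting$ and $u^{(0)}$ in the formula: the projection $P_V \lifting$ carries the inhomogeneous boundary data $b_j$, while $P_M u^{(0)}$ carries the volumetric source $f$, so that each datum of the problem is picked up exactly once.
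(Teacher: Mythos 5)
Your argument is correct and follows essentially the same route as the paper's proof: you verify that $P_V\lifting$ carries the boundary data while $P_M u^{(0)}$ carries the source term, and conclude by linearity and uniqueness. The only difference is that you spell out why elements of $V$ are $\mathcal{L}$-harmonic in the perforated domain and why elements of $M$ have vanishing boundary data, which the paper simply asserts.
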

\begin{proof}
Owing to the definitions of $\lifting$ and $u^{(0)}$, one can rewrite problem \eqref{BVP main equation}-\eqref{BVP additional condition} as
\[
\left\{\begin{aligned}
&\mathcal{L}u=\mathcal{L}u^{(0)}\text{ in }\Omega\setminus\cup_{j=1}^N\overline{O_j},\\
&\mathcal{B}_ju=\mathcal{B}_j\lifting
,\ j=1,\dots,N,\\
&\mathcal{B}_0u=\mathcal{B}_0\lifting.
\end{aligned}\right.
\]
Observing that $P_V\lifting$ and $P_Mu^{(0)}$ satisfy the following systems in the exterior of the objects
\[
\left\{\begin{aligned}
&\mathcal{L}(P_V\lifting)=0\text{ in }\Omega\setminus\cup_{j=1}^N\overline{O_j},\\
&\mathcal{B}_j(P_V\lifting)=\mathcal{B}_j\lifting
\ j=1,\dots,N,\\
&\mathcal{B}_0(P_V\lifting)=\mathcal{B}_0\lifting,
\end{aligned}\right.
\text{ and }
\left\{\begin{aligned}
&\mathcal{L}(P_Mu^{(0)})=\mathcal{L}u^{(0)}\text{ in }\Omega\setminus\cup_{j=1}^N\overline{O_j},\\
&\mathcal{B}_j(P_Mu^{(0)})=0
,\ j=1,\dots,N,\\
&\mathcal{B}_0(P_Mu^{(0)})=0,
\end{aligned}\right.
\]
we conclude using the linearity of the problems.
\end{proof}

\medskip

\begin{remark}\label{remark interior problem}
The function $u^*$ introduced in Proposition \ref{proposition solution by projections} is an extension to the whole of domain $\Omega$ of the solution of the problem set in the perforated domain $\Omega\setminus\cup_{j=1}^N\overline{O_j}$, satisfying
\[
\mathcal{L}u^*=\mathcal{L}g\text{ in }\cup_{j=1}^NO_j.
\]
\end{remark}

For any integer $i$ in $\{1,\dots,N\}$, considering well-posed single-object exterior-interior transmission problems in place of \eqref{exterior problem for w}-\eqref{interior problem for w}, made up of systems of the form
\begin{equation}\label{problem for w_i}
\forall i\in\{1,\dots,N\},\ \left\{\begin{aligned}
&\mathcal{L}w=\mathcal{L}u\text{ in }\Omega\setminus\overline{O_i},\\
&\mathcal{B}_iw=0
,\\
&\mathcal{B}_0w=0,
\end{aligned}\right.
\text{ and }\mathcal{L}w=0\text{ in }O_i,
\end{equation}
completed by a transmission condition across the boundary $\partial O_i$ similar to those previously chosen for the multiple-object problem, one can similarly introduce the projection operators $P_{V_i}$ and $P_{M_i}$, with respective complementary closed ranges $V_i$ and $M_i$ in $H$. It is then clear that
\[
M=\cap_{j=1}^N M_j.
\]
As was done in Proposition \ref{proposition solution by projections} for the solution to problem \eqref{BVP main equation}-\eqref{BVP additional condition}, the reflections can be shown to be the restrictions of some quantities involving these projection operators. More precisely, we have the following result, whose proof is left to the reader.

\begin{proposition}\label{proposition reflection by projections}
Suppose that problem \eqref{BVP main equation}-\eqref{BVP additional condition} admits a lifting $\lifting$ in $H$ of its boundary data, that there exists an initial approximation $u^{(0)}$ in $H$ satisfying \eqref{definition u0}, and that the single-object exterior-interior transmission problems \eqref{problem for w_i} are well-posed in $H$ for any choice of $u$ in $H$. Then, for any integer $i$ in $\{1,\dots,N\}$, the reflection $u_i^{(k)}$ is given by the restriction to $\Omega\setminus\overline{O_i}$ of 
\[
-P_{V_i}\left(u^{(0)}+\sum_{j=1}^{i-1}u_j^{(1)}-\lifting\right)\text{(resp. } -P_{V_i}\left(u^{(0)}-\lifting\right)\text{) if }k=1,
\]
\[
-P_{V_i}\left(\sum_{j=1}^{i-1}u_j^{(k)}+\sum_{j=i+1}^{N}u_j^{(k-1)}\right)\text{(resp. } -P_{V_i}\left(\sum_{j=1}^{i-1}u_j^{(k-1)}+\sum_{j=i+1}^{N}u_j^{(k-1)}\right)\text{) if }k>1,
\]
for the sequential (resp. parallel) version of the method of reflections, where $P_{V_i}$ denotes the projection from $H$ onto $V_i$ along $M_i$.
\end{proposition}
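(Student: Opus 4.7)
The plan is to verify directly that the candidate functions given in the statement satisfy the same defining single-object problems as the reflections; uniqueness (via well-posedness of the problems \eqref{problem for w_i}) then forces equality. The argument is entirely parallel to (and in fact modelled on) the proof of Proposition \ref{proposition solution by projections}, but applied object by object.

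First, I would record the characterisation of $P_{V_i}$ that follows from the decomposition $H = V_i \oplus M_i$: for any $u$ in $H$, the image $P_{V_i}u$ lies in $V_i$, hence satisfies $\mathcal{L}(P_{V_i}u) = 0$ in $\Omega\setminus\overline{O_i}$ and $\mathcal{B}_0(P_{V_i}u)=0$; moreover, since $u - P_{V_i}u = P_{M_i}u\in M_i$ and elements of $M_i$ satisfy $\mathcal{B}_iw=0$, one has $\mathcal{B}_i(P_{V_i}u)=\mathcal{B}_iu$. Thus, for any function $u$ in $H$, $-P_{V_i}u$ solves
\[
\mathcal{L}w=0\text{ in }\Omega\setminus\overline{O_i},\quad \mathcal{B}_iw = -\mathcal{B}_iu,\quad \mathcal{B}_0 w = 0,
\]
and this solution is unique in $H$ by well-posedness of \eqref{problem for w_i}.

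Next, I would apply this observation for each $k$ and $i$ with the appropriate choice of $u$, and compare with the defining equations \eqref{initseq}--\eqref{iterseq} (sequential case) or \eqref{initpar}--\eqref{iterpar} (parallel case) of the reflections. For instance, for the sequential form at $k=1$, taking $u = u^{(0)}+\sum_{j=1}^{i-1}u_j^{(1)} - \lifting$ and using $\mathcal{B}_i\lifting = b_i$ yields
\[
\mathcal{B}_i\!\left(-P_{V_i}u\right) = -\mathcal{B}_i u = b_i - \mathcal{B}_i\!\left(u^{(0)}+\sum_{j=1}^{i-1}u_j^{(1)}\right),
\]
which is exactly the boundary condition in \eqref{initseq}; the equation in $\Omega\setminus\overline{O_i}$ and the outer condition being automatic, uniqueness identifies $-P_{V_i}u$ with $u_i^{(1)}$ on $\Omega\setminus\overline{O_i}$. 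For $k>1$ in the sequential case, taking $u = \sum_{j=1}^{i-1}u_j^{(k)}+\sum_{j=i+1}^{N}u_j^{(k-1)}$ reproduces in the same way the boundary datum appearing in \eqref{iterseq}. The parallel case is handled identically, with $u^{(0)}-\lifting$ for $k=1$ and $\sum_{j=1}^{i-1}u_j^{(k-1)}+\sum_{j=i+1}^{N}u_j^{(k-1)}$ for $k>1$, the only change being that the current-cycle reflections $u_j^{(k)}$ never enter the argument of $P_{V_i}$.

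There is no real obstacle: the proof is a one-line verification per case once the characterisation of $P_{V_i}u$ via its image and kernel is in hand. The only mild subtlety is bookkeeping—being careful that the identities hold \emph{in $H$}, so that the equality of the reflections with the claimed expressions, initially stated on $\Omega\setminus\overline{O_i}$, is obtained by restriction of an equality between two elements of $V_i\subset H$.
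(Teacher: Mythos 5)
Your argument is correct and is exactly the one the paper intends (the proof is left to the reader there, with the instruction to proceed ``as was done in Proposition \ref{proposition solution by projections}''): characterise $-P_{V_i}u$ as the unique solution of the single-object transmission problem with boundary datum $-\mathcal{B}_iu$, then match that datum against \eqref{initseq}--\eqref{iterseq} or \eqref{initpar}--\eqref{iterpar} and conclude by uniqueness; this is also precisely the step invoked in the proofs of Propositions \ref{MR1} and \ref{MR2}. The only cosmetic caveat is that $\mathcal{B}_0(P_{V_i}u)=0$ follows from $\mathcal{B}_0(P_{M_i}u)=0$ only together with $\mathcal{B}_0u=0$, which holds for every argument you feed to $P_{V_i}$ (and automatically whenever $H$ encodes the outer condition, as in all the paper's examples), so nothing is lost.
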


\medskip

We finally derive some relations needed for the forthcoming analysis of the method of reflections, starting with the sequential version.

\begin{proposition}\label{MR1}
Under the assumptions and notations of Proposition \ref{proposition reflection by projections}, the following recurrence formula holds for the sequence of approximations produced by the sequential form of the method of reflections,
\begin{equation}\label{errformseq}
\forall k\in\mathbb{N},\ u^{(k+1)}-\lifting={E_N}\left(u^{(k)}-\lifting\right),
\end{equation}
where $E_N=(Id_H-P_{V_N})\dots(Id_H-P_{V_1})$, with $Id_H$ the identity operator on $H$.
\end{proposition}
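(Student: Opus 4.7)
The plan is to establish, by induction on $i\in\{0,1,\dots,N\}$, the stronger ``cycle-internal'' identity
\[
u^{(k)}+\sum_{j=1}^{i}u_j^{(k+1)}-\lifting=(Id_H-P_{V_i})\cdots(Id_H-P_{V_1})\bigl(u^{(k)}-\lifting\bigr),
\]
since specialising to $i=N$ and using the update formula \eqref{update} is exactly \eqref{errformseq}. The base case $i=0$ is trivial (both sides equal $u^{(k)}-\lifting$). The inductive step amounts to checking that
\[
u_i^{(k+1)}=-P_{V_i}\!\left(u^{(k)}+\sum_{j=1}^{i-1}u_j^{(k+1)}-\lifting\right)\qquad(\star)
\]
for every $k\in\mathbb{N}$ and every $i\in\{1,\dots,N\}$.

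For $k=0$, the identity $(\star)$ is exactly the first-cycle formula supplied by Proposition \ref{proposition reflection by projections}. For $k\ge 1$, Proposition \ref{proposition reflection by projections} instead gives
\[
u_i^{(k+1)}=-P_{V_i}\!\left(\sum_{j=1}^{i-1}u_j^{(k+1)}+\sum_{j=i+1}^{N}u_j^{(k)}\right),
\]
so comparing with $(\star)$, the two expressions agree provided $P_{V_i}(x)=0$, where, after substituting $u^{(k)}=u^{(k-1)}+\sum_{j=1}^{N}u_j^{(k)}$,
\[
x:=u^{(k-1)}+\sum_{j=1}^{i}u_j^{(k)}-\lifting.
\]
Here Lemma \ref{correction formula lemma} enters decisively: its shifted form reads $\mathcal{B}_i\bigl(u^{(k-1)}+\sum_{j=1}^{i}u_j^{(k)}\bigr)=\mathcal{B}_i\lifting$, so $\mathcal{B}_i x=0$. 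Combined with $\mathcal{B}_0 x=0$ (which follows from $\mathcal{B}_0 u^{(k-1)}=\mathcal{B}_0 u_j^{(k)}=\mathcal{B}_0\lifting=0$), one concludes $x\in M_i=\ker P_{V_i}$, and hence $(\star)$ holds.

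The main subtlety will be the step ``$\mathcal{B}_ix=0$ and $\mathcal{B}_0x=0$ imply $x\in M_i$''. Strictly speaking, membership in $M_i$ also requires $\mathcal{L}x=0$ in $O_i$ together with the chosen transmission conditions across $\partial O_i$; however, this is built into the construction by the choice of $H$ and the fact that each $u_j^{(k)}$ (from Proposition \ref{proposition reflection by projections}) and $\lifting$ are bona fide elements of $H$ whose interior extensions were fixed precisely so that the well-posedness of the exterior-interior transmission problems yields the decomposition $H=V_i\oplus M_i$. Once this consistency is acknowledged, the two-case inductive argument on $i$ closes and produces the recurrence, with the operator $E_N=(Id_H-P_{V_N})\cdots(Id_H-P_{V_1})$ emerging directly from telescoping the single-projection corrections.
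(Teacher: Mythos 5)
Your argument is correct and follows essentially the same route as the paper's proof: both establish the cycle-internal identity $u_i^{(k+1)}=-P_{V_i}\bigl(u^{(k)}+\sum_{j=1}^{i-1}u_j^{(k+1)}-\lifting\bigr)$ by combining Lemma \ref{correction formula lemma} with the characterisation of $P_{V_i}$, and then telescope over $i$ to obtain $E_N$. The only cosmetic difference is that the paper derives this identity directly from \eqref{correction formula sequential} together with the extension of problem \eqref{iterseq}, whereas you route it through Proposition \ref{proposition reflection by projections} and a kernel computation; the treatment of the interior and transmission conditions is at the same level of rigour in both.
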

\begin{proof}
Let us denote $E_0=Id_H$, $E_i=(Id_H-P_{V_i})E_{i-1}$, with $i$ in $\{1,\dots,N\}$. We shall first prove by induction that
\[
\forall k\in\mathbb{N},\ \forall i\in\left\{1,\dots,N\right\},\ u^{(k+1)}_i=-P_{V_i}E_{i-1}\left(u^{(k)}-g\right).
\]
First, it follows from \eqref{correction formula sequential} that
\[
\forall k\in\mathbb{N},\ \forall i\in\left\{1,\dots,N\right\},\ \mathcal{B}_i\left(u^{(k+1)}_i\right)=-\mathcal{B}_i\left(u^{(k)}-\lifting+\sum_{j=1}^{i-1}u^{(k+1)}_j\right),
\]
which, due to the definition of the projection operator $P_{V_i}$ and to the extension of problem \eqref{iterseq}, translates into
\[
\forall k\in\mathbb{N},\ \forall i\in\left\{1,\dots,N\right\},\ u^{(k+1)}_i=-P_{V_i}\left(u^{(k)}-\lifting+\sum_{j=1}^{i-1}u^{(k+1)}_j\right).
\]
For $i=1$, this identity is simply
\[
\forall k\in\mathbb{N},\ u^{(k+1)}_1=-P_{V_1}\left(u^{(k)}-\lifting\right).
\]
Next, assume that
\[
\forall k\in\mathbb{N},\ \forall i\in\{1,\dots,N-1\},\ u^{(k+1)}_i=-P_{V_i}E_{i-1}\left(u^{(k)}-\lifting\right).
\]
Then, one has
\[
u^{(k+1)}_{i+1}=-P_{V_{i+1}}\left(u^{(k)}-\lifting+\sum_{j=1}^{i}u^{(k+1)}_j\right)=-P_{V_{i+1}}\left(Id-\sum_{j=1}^{i}P_{V_j}E_{j-1}\right)\left(u^{(k)}-\lifting\right)=-P_{V_{i+1}}E_i\left(u^{(k)}-\lifting\right),
\]
which ends the induction. Using definition \eqref{update}, one finally reaches
\[
u^{(k+1)}-\lifting=u^{(k)}-g+\sum_{i=1}^Nu^{(k+1)}_i=\left(Id-\sum_{i=1}^NP_{V_i}E_{i-1}\right)\left(u^{(k)}-\lifting\right)=E_N\left(u^{(k)}-\lifting\right).
\]
\end{proof}


\medskip

An analogous result holds for the parallel version of the method.

\begin{proposition}\label{MR2}
Under the assumptions and notations of Proposition \ref{proposition reflection by projections}, the following recurrence formula holds for the sequence of approximations produced by the parallel form of the method of reflections,
\begin{equation}\label{error propagation formula parallel}
\forall k\in\mathbb{N},\ u^{(k+1)}-\lifting=\left(Id-\sum_{i=1}^NP_{V_i}\right)\left(u^{(k)}-\lifting\right).
\end{equation}
\end{proposition}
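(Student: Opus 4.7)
The plan is to mirror the strategy used for Proposition \ref{MR1}, but exploit the crucial simplification specific to the parallel form: at each cycle $k+1$, the reflection $u_i^{(k+1)}$ depends only on $u^{(k)}$ and the data, with no coupling to the other reflections $u_j^{(k+1)}$ of the same cycle. As a consequence, no induction on the object index $i$ is needed, and one directly obtains a closed-form expression for each reflection in terms of the projectors $P_{V_i}$.

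First, I would invoke the parallel correction formula \eqref{correction formula parallel} from Lemma \ref{correction formula lemma}, which yields, for every $k\in\mathbb{N}$ and every $i\in\{1,\dots,N\}$,
\[
\mathcal{B}_i u_i^{(k+1)} = \mathcal{B}_i \lifting - \mathcal{B}_i u^{(k)} = -\mathcal{B}_i\!\left(u^{(k)}-\lifting\right).
\]
Combined with the extension to all of $\Omega$ of problem \eqref{iterpar} (namely $\mathcal{L}u_i^{(k+1)}=0$ in $\Omega\setminus\overline{O_i}$ together with the chosen transmission condition on $\partial O_i$ and $\mathcal{B}_0 u_i^{(k+1)}=0$), this shows that the extension of $u_i^{(k+1)}$ solves exactly the single-object exterior-interior transmission problem \eqref{problem for w_i} with datum $-(u^{(k)}-\lifting)$. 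By the definition of the projector $P_{V_i}$ along $M_i$, we thus obtain the key identity
\[
\forall k\in\mathbb{N},\ \forall i\in\{1,\dots,N\},\ u_i^{(k+1)} = -P_{V_i}\!\left(u^{(k)}-\lifting\right).
\]

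Finally, I would sum this identity over $i$ and apply the update formula \eqref{update}, obtaining
\[
u^{(k+1)}-\lifting = \left(u^{(k)}-\lifting\right) + \sum_{i=1}^N u_i^{(k+1)} = \left(Id_H - \sum_{i=1}^N P_{V_i}\right)\!\left(u^{(k)}-\lifting\right),
\]
which is precisely \eqref{error propagation formula parallel}. There is really no hard step here: the only point requiring some care is verifying that the projector characterisation applies to $u_i^{(k+1)}$, i.e.\ that its extension indeed satisfies the transmission problem \eqref{problem for w_i}, but this is exactly the same justification already used in the proof of Proposition \ref{MR1}. The absence of an intra-cycle coupling in the parallel scheme makes the error propagation operator a plain sum $Id_H-\sum_i P_{V_i}$, in contrast with the non-commutative product $E_N=(Id_H-P_{V_N})\cdots(Id_H-P_{V_1})$ obtained in the sequential case.
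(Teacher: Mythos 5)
Your proposal is correct and follows essentially the same route as the paper's own proof: it invokes the parallel correction formula \eqref{correction formula parallel} to identify $u_i^{(k+1)}=-P_{V_i}\left(u^{(k)}-\lifting\right)$ via the extension of problem \eqref{iterpar}, then sums over $i$ using the update formula \eqref{update}. The only difference is presentational, as you spell out the comparison with the sequential case more explicitly.
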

\begin{proof}
Owing to \eqref{correction formula parallel}, one has
\[
\forall k\in\mathbb{N},\ \forall i\in\{1,\dots,N\},\ \mathcal{B}_i{u^{(k+1)}_i}=-\mathcal{B}_i\left(u^{(k)}-\lifting\right),
\]
which gives, due to the definition of the projection operator $P_{V_i}$ and the extension of \eqref{iterpar},
\[
\forall k\in\mathbb{N},\ \forall i\in\{1,\dots,N\},\ u^{(k+1)}_i=-P_{V_i}\left(u^{(k)}-\lifting\right)
.
\]
Substituting into \eqref{update}, one reaches
\[
\forall k\in\mathbb{N},\ u^{(k+1)}-\lifting=u^{(k)}-\lifting+\sum_{i=1}^Nu^{(k+1)}_i=u^{(k)}-\lifting-\sum_{i=1}^NP_{V_i}\left(u^{(k)}-\lifting\right),
\]
yielding \eqref{error propagation formula parallel}.
\end{proof}

%

\section{Convergence analysis in a Hilbert space setting}\label{section: convergence analysis}
We now flesh out the structure of the previous mathematical setting by making a number of additional assumptions. We suppose that the function space $H$ is a real Hilbert space 
and that the problem to solve can be written under an equivalent weak formulation
, involving a continuous bilinear form $a(\cdot,\cdot)$ from $H\times H$ to $\mathbb{R}$ 
satisfying inf-sup conditions which ensure the well-posedness of the problem. In this context, the projection operators $P_{M_i}$, with $i$ in $\{1,\dots,N\}$, previously introduced can be defined by
\begin{equation}\label{definition of projections in Hilbert context}
\forall u\in H,\ \forall v\in M_i,\ a(P_{M_i}u,v)=a(u,v),
\end{equation} 
%
%
 (see \cite{Xu:2002} for instance), and the method of reflections is said to be \emph{convergent in $H$} if the sequence $(u^{(k)})_{k\in\mathbb{N}}$ tends to $u^*$ with respect to the norm on $H$. In view of the respective formulas derived in Propositions \ref{MR1} and \ref{MR2}, it is then clear that the convergence of the method is tied to the behaviour of the fixed-point iteration of a bounded linear mapping $T$ from $H$ to itself, where $T=E_N=(Id_H-P_{V_N})\dots(Id_H-P_{V_1})$ for the sequential \textcolor{black}{form of the} method of reflections, or $T=Id_H-\sum_{i=1}^NP_{V_i}$ for the parallel one.

\subsection{The orthogonal case: theoretical results} 
For some boundary value problems of the form \eqref{BVP main equation}-\eqref{BVP additional condition}, notably when the operator $\mathcal{L}$ is of elliptic type and the boundary conditions are essential, convenient choices of the Hilbert space $H$ make the bilinear form $a(\cdot,\cdot)$ introduced above an equivalent inner product on $H$. This results in the linear mappings $P_{M_i}$, with $i$ in $\{1,\dots,N\}$, satisfying \eqref{definition of projections in Hilbert context}, being \emph{orthogonal projection operators}. This observation is at the origin of the \emph{method of orthogonal projections}, developed by Vishik \cite{Vishik:1949} in connection with previous works by Zaremba \cite{Zaremba:1927} and Weyl \cite{Weyl:1940}, and adapted by Hruslov for a problem for an elliptic operator set in a perforated domain \cite{Hruslov:1972}.

Under the above assumptions, for any integer $i$ in $\{1,\dots,N\}$, the subspace $V_i={M_i}^\perp$ is the \emph{orthogonal complement of $M_i$ in $H$} and $P_{V_i}=Id-P_{M_i}$ is also an orthogonal projection operator. Likewise, we have that $V=M^\perp$, so that
\[ 
V=\left(\cap_{j=1}^NM_j\right)^\perp=\overline{\sum_{j=1}^N{M_j}^\perp}=\overline{\sum_{j=1}^NV_j}.
\] 

We will now address the convergence of both forms of the method within this framework.

\subsubsection{Sequential form}
In the case of orthogonal projection operators, the sequential form of the method of reflections is closely related to the \emph{method of alternating (or cyclic) projections} (MAP for short), which is a simple iterative procedure for determining the orthogonal projection of an element 
onto an intersection of closed (linear) subspaces $M_1,\dots,M_N$ of a Hilbert space $H$ using a sequence of orthogonal projections onto those subspaces. It has numerous applications (the interested reader may check the review by Deutsch \cite{Deutsch:1992} for details) and its 
design relies on the pointwise convergence result
, first proved by von Neumann in 1933 (but not published until 1949) in the $N=2$ case while working on the theory of operators \cite{vonNeumann:1949} and later generalised by Halperin~\cite{Halperin:1962} to any integer $N\geq2$
, stating that one has
\[
\forall v\in H,\ \lim_{k\to+\infty}\|(P_{M_N}P_{M_{N-1}}\dots P_{M_1})^kv-P_{\cap_{j=1}^NM_j}v\|_{H}=0.
\]
Consequently, the computation of the orthogonal projection of a point $v$ in $H$ onto $\cap_{j=1}^NM_j$ by the MAP consists in building the sequence $(w^{(k)})_{k\in\mathbb{N}}$ defined by
\[
w^{(0)}=v\text{ and},\ \forall k\in\mathbb{N},\ w^{(k+1)}=P_{M_N}\dots P_{M_1}w^{(k)},
\]
that is, by recursively applying the orthogonal projections to the current iterate. It follows that recurrence relation \eqref{errformseq} for the sequential version of the method of reflections is simply a particular application of the MAP. Thus, using that $P_V+P_M=Id_H$ and that, for any integer $i$ in $\{1,\dots,N\}$, $P_{V_i}P_M=0$, one can write that
\[
\forall k\in\mathbb{N}^*,\ u^{(k)}=(E_N)^k(u^{(0)}-P_V\lifting)+P_V\lifting,
\]
with $E_N=(Id_H-P_{V_N})\dots(Id_H-P_{V_1})=P_{M_N}P_{M_{N-1}}\dots P_{M_1}$, so that passing to the limit yields
\[
\lim_{k\to+\infty}u^{(k)}=P_M(u^{(0)}-P_V\lifting)+P_V\lifting=P_M(u^{(0)})+P_V\lifting=u^*\text{ in }H.
\]
The unconditional convergence of the method ensues and we have proved the following result.


\begin{theorem}\label{convergence sequential}
Under the assumptions and notations of Proposition~\ref{proposition reflection by projections}, suppose that the projection operators $P_{V_i}$, with $i$ in $\{1,\dots,N\}$, are orthogonal. Then, the sequence of approximations $\left(u^{(k)}\right)_{k\in\mathbb{N}}$ generated by the sequential form of the method of reflections converges to the solution $u^*$ in $H$.
\end{theorem}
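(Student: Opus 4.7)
The plan is to show that the sequential method amounts to iterating a product of orthogonal projections, then invoke the von Neumann--Halperin convergence theorem to identify the limit with $u^*$ as expressed in Proposition~\ref{proposition solution by projections}.

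First I would iterate the recurrence \eqref{errformseq} from Proposition~\ref{MR1}: a straightforward induction on $k$ yields
\[
\forall k\in\mathbb{N},\ u^{(k)}-\lifting=(E_N)^k\bigl(u^{(0)}-\lifting\bigr),
\]
where $E_N=(Id_H-P_{V_N})\cdots(Id_H-P_{V_1})$. Under the orthogonality hypothesis, for each $i$ we have $Id_H-P_{V_i}=P_{M_i}$ (since $V_i=M_i^{\perp}$ and the two orthogonal projections onto complementary subspaces sum to the identity), so $E_N=P_{M_N}P_{M_{N-1}}\cdots P_{M_1}$ is precisely a cyclic product of orthogonal projections onto closed subspaces.

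Next I would apply the von Neumann--Halperin theorem (cited right before the statement) to conclude that, for every $v\in H$,
\[
\lim_{k\to+\infty}(E_N)^k v = P_{\cap_{j=1}^N M_j}v = P_M v,
\]
where $M=\cap_{j=1}^N M_j$ and $P_M$ is the orthogonal projection onto $M$. Applying this with $v=u^{(0)}-\lifting$ gives $\lim_k u^{(k)} = P_M(u^{(0)}-\lifting)+\lifting$ in $H$. It remains to rewrite this limit as $u^*$. Using the orthogonal decomposition $H=V\oplus M$ with $V=M^{\perp}=\overline{\sum_j V_j}$, I split $\lifting=P_V \lifting + P_M \lifting$, so that
\[
P_M(u^{(0)}-\lifting)+\lifting = P_M u^{(0)} - P_M \lifting + P_V \lifting + P_M \lifting = P_M u^{(0)} + P_V \lifting,
\]
and the right-hand side is exactly $u^*$ by Proposition~\ref{proposition solution by projections}.

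No step is genuinely hard here once the recurrence is in place: the only point requiring care is the identification $Id_H-P_{V_i}=P_{M_i}$, which relies crucially on the orthogonality hypothesis (without it, $Id_H-P_{V_i}$ is still the projection onto $M_i$ \emph{along} $V_i$, but von Neumann--Halperin does not apply to general oblique projections). The algebraic simplification of the limit using the decomposition $H=V\oplus M$ is then routine and connects directly to Proposition~\ref{proposition solution by projections}.
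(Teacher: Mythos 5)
Your proposal is correct and follows essentially the same route as the paper: iterate the recurrence of Proposition~\ref{MR1}, identify $E_N$ with the cyclic product $P_{M_N}\cdots P_{M_1}$ of orthogonal projections, invoke the von Neumann--Halperin theorem, and match the limit with $u^*$ via Proposition~\ref{proposition solution by projections}. The only cosmetic difference is that the paper absorbs the $P_M\lifting$ component before passing to the limit (using $E_NP_M=P_M$), whereas you split $\lifting=P_V\lifting+P_M\lifting$ afterwards; both are equivalent routine algebra.
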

 


\subsubsection{Parallel form} 
The fact that the projection operators are orthogonal does not allow one to conclude that the parallel version of the method of reflections is convergent as it is the case for its sequential form. Nevertheless, if one is able to show that the operator $T=Id_H-\sum_{j=1}^NP_{V_j}$ is nonexpansive\footnote{The mapping $T$ is said to be \emph{nonexpansive} on the normed space $H$ if it is a function from $H$ to itself such that \[\forall(u,v)\in H^2,\ \norm{Tu-Tv}_H\leq\norm{u-v}_H.\]} and  asymptotically regular\footnote{The mapping $T$ is said to be \emph{asymptotically regular} on the normed space $H$ if it is a function from $H$ to itself such that \[\forall u\in H,\ \lim_{k\to+\infty}\norm{T^{k+1}u-T^ku}_H=0.\]}, it is known (see Corollary 2.3 in \cite{Bauschke:2003}) that, for any $u$ in $H$, the sequence $(T^ku)_{k\in\mathbb{N}}$ converges to $P_{\operatorname{Fix}T}u$ in $H$, where $P_{\operatorname{Fix}T}$ is the orthogonal projector on the fixed point set for $T$, and that\footnote{Indeed, one has trivially that $M\subset\operatorname{Fix}T$, while the converse follows from the idempotence and the self-adjointness of the orthogonal projectors.} $\operatorname{Fix}T=M$. Two major drawbacks of such a result is that the imposed conditions on the operator $T$ are often not readily checkable and that it does not hold in a nonorthogonal context. As a consequence, geometric conditions on the objects and their respective positions ensuring the summability of the series defined by \eqref{approximate solution} are generally preferred (see for instance \cite{Traytak:2006} or \cite{Balabane:2004} in a nonorthogonal framework).

Nevertheless, one maay obtain an unconditionally convergent and properly parallel method by modifying the algorithm in such a way that the resulting recurrence formula reads
\[
\forall k\in\mathbb{N},\ u^{(k+1)}-\lifting=\left(Id-\frac{1}{N}\,\sum_{j=1}^NP_{V_j}\right)\left(u^{(k)}-\lifting\right)=\left(\frac{1}{N}\,\sum_{j=1}^N(Id-P_{V_j})\right)\left(u^{(k)}-\lifting\right).
\]
This alteration can be interpreted as a \emph{relaxation} of the recurrence relation \eqref{error propagation formula parallel} of the form
\[
\forall k\in\mathbb{N},\ u^{(k+1)}-\lifting=\nu\left(Id-\sum_{j=1}^NP_{V_j}\right)\left(u^{(k)}-\lifting\right)+(1-\nu)\,\left(u^{(k)}-\lifting\right),
\]
with relaxation factor $\nu=\frac{1}{N}$, yielding
\[
\forall k\in\mathbb{N},\ u^{(k)}-\lifting=\left(\frac{1}{N}\,\sum_{j=1}^N(Id-P_{V_j})\right)^k\left(u^{(0)}-\lifting\right).
\]
It is then a well-known fact, since the work of Cimmino \cite{Cimmino:1938} on the \emph{method of averaged (or simultaneous) projections} (an iterative method to solve linear systems of equation based on a geometrical approach), and its extension by Auslender \cite{Auslender:1976}, that, for any closed (linear) subspaces $M_1,\dots,M_N$ of a Hilbert space $H$, one has the pointwise convergence result\footnote{This result remains valid for the more general sum $\sum_{j=1}^N\gamma_j\,P_{M_j}$, where the scalars $\gamma_i$, with $i$ in $\{1,\dots,N\}$, are the weights of a convex combination (that is, such that, $\forall i\in\{1,\dots,N\}$, $\gamma_i>0$, and $\sum_{j=1}^n\gamma_j=1$).}
\[
\forall v\in H,\ \lim_{k\to+\infty}\left\|\left(\frac{1}{N}\,\sum_{j=1}^NP_{M_j}\right)^kv-P_{\cap_{j=1}^NM_j}v\right\|_{H}=0.
\]
The convergence of the method then follows from the same arguments as in the previous subsection.
 

\begin{theorem}\label{convergence averaged parallel}
Under the assumptions and notations of Proposition~\ref{proposition reflection by projections}, suppose that the projection operators $P_{V_i}$, with $i$ in $\{1,\dots,N\}$, are orthogonal. Then, the sequence of approximations $\left(u^{(k)}\right)_{k\in\mathbb{N}}$ generated by the averaged  parallel version of the parallel form of the method of reflections converges to the solution $u^*$ in $H$.
\end{theorem}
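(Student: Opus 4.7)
The plan is essentially to import the Cimmino--Auslender convergence result cited just above the theorem statement and combine it with the identification $u^\ast = P_V g + P_M u^{(0)}$ from Proposition \ref{proposition solution by projections}. The proof should be very short because the hard analytical work sits in the cited pointwise convergence of averaged orthogonal projections.

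First, I would unfold the displayed recurrence for the averaged parallel variant, namely
\[
u^{(k)} - g = \left(\frac{1}{N}\sum_{j=1}^{N}(Id_H - P_{V_j})\right)^{k}(u^{(0)} - g),
\]
and use the orthogonality hypothesis to rewrite $Id_H - P_{V_j} = P_{M_j}$ for every $j \in \{1,\dots,N\}$, so that the iteration operator becomes the average $\frac{1}{N}\sum_{j=1}^{N} P_{M_j}$ of genuine orthogonal projectors. This places us in the exact setting of the Cimmino--Auslender theorem invoked in the paragraph preceding the statement.

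Next, applying that pointwise convergence result to the vector $v = u^{(0)} - g \in H$ gives
\[
\lim_{k\to+\infty} \left\| u^{(k)} - g - P_{\cap_{j=1}^{N} M_j}\bigl(u^{(0)} - g\bigr) \right\|_H = 0,
\]
and since $M = \cap_{j=1}^{N} M_j$ (as recorded before Proposition \ref{proposition reflection by projections}), the limit is $P_M(u^{(0)} - g)$. It only remains to identify this limit with $u^\ast - g$. In the orthogonal setting one has $V = M^\perp$ and hence $P_V + P_M = Id_H$, which yields $P_M(u^{(0)} - g) = P_M u^{(0)} - (Id_H - P_V)g = P_M u^{(0)} + P_V g - g$. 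By Proposition \ref{proposition solution by projections}, $P_V g + P_M u^{(0)} = u^\ast$, so $u^{(k)} \to u^\ast$ strongly in $H$.

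There is essentially no obstacle: the only thing to be careful about is the bookkeeping between the two pairs of projectors $P_{V_i}$ and $P_{M_i}$, and making sure the convergence is phrased on the full space $H$ before restricting back to $\Omega \setminus \cup_{j=1}^{N}\overline{O_j}$ to recover the solution of the original perforated-domain problem. The convergence of the restrictions then follows immediately from the continuity of the restriction map into whichever function space the solution naturally lives in.
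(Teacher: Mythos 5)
Your proposal is correct and follows essentially the same route as the paper: unfold the relaxed recurrence into powers of $\frac{1}{N}\sum_{j}P_{M_j}$, invoke the Cimmino--Auslender pointwise convergence to $P_{\cap_j M_j}=P_M$, and identify the limit with $u^*$ via Proposition~\ref{proposition solution by projections} and $P_V+P_M=Id_H$. The paper phrases the last step by applying the iteration operator to $u^{(0)}-P_V\lifting$ rather than $u^{(0)}-\lifting$, but this is only a cosmetic difference in bookkeeping.
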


In terms of induced practical changes, the update formula for the approximation of the solution (see equation \eqref{update}) becomes
\[
\forall k\in\mathbb{N},\ u^{(k+1)}=u^{(k)}+\frac{1}{N}\,\sum_{j=1}^Nu^{(k+1)}_j\text{ in }\Omega\setminus\cup_{j=1}^N\overline{O_j},
\]
the reflection $u^{(k+1)}_i$ being obtained by solving sub-problem \eqref{iterpar} in which the second equation is replaced by
\[
\mathcal{B}_i(u^{(k+1)}_i)=-\mathcal{B}_i\left(\frac{1}{N}\left(\sum_{j=1}^{i-1}u_j^{(k)}+\sum_{j=i+1}^Nu_j^{(k)}\right)-\left(1-\frac{1}{N}\right)u_i^{(k)}\right)
,
\]
confirming that the modification may be viewed as a simple relaxation method applied to the parallel form of the method of reflections. One may also remark that the interpretation of the auxiliary fields as ``reflections'' remains, since it can be shown that equality \eqref{correction formula parallel} still holds. We thus refer to this variant as the \emph{averaged} parallel form of the method of reflections.

\subsubsection{Rate of convergence}
Let us now deal with the rate of convergence of the method by recalling the following dichotomy property for the MAP (see Theorem 6.4 in \cite{Deutsch:2010} and also Theorem 1.4 in \cite{Bauschke:2009} for the case $N=2$):

\begin{itemize}
\item If $\sum_{j=1}^N{M_j}^\perp$ is closed, the sequence $((P_{M_N}P_{M_{N-1}}\dots P_{M_1})^k)_{k\in\mathbb{N}}$ converges to $P_{\cap_{j=1}^NM_j}$ linearly\footnote{Some authors say the sequence converges \emph{uniformly} (see \cite{Badea:2012}).}, that is, there exist constants $C>0$ and $\alpha\in[0,1)$ such that
\[
\forall k\in\mathbb{N},\ \norm{(P_{M_N}P_{M_{N-1}}\dots P_{M_1})^k-P_{\cap_{j=1}^NM_j}}_{\mathscr{L}(H)}\leq C\,\alpha^k.
\]
\item If $\sum_{j=1}^N{M_j}^\perp$ is not closed, the sequence $((P_{M_N}P_{M_{N-1}}\dots P_{M_1})^k)_{k\in\mathbb{N}}$ converges to $P_{\cap_{j=1}^NM_j}$ arbitrarily slowly, that is,
\begin{enumerate}[(i)]
\item the sequence $((P_{M_N}P_{M_{N-1}}\dots P_{M_1})^k)_{k\in\mathbb{N}}$ converges pointwise to $P_{\cap_{j=1}^NM_j}$,
\item for each real-valued function $\phi$ on the positive integers that converges to $0$, there exists a point $v$ in $H$ such that
\[
\forall k\in\mathbb{N},\ \norm{(P_{M_N}P_{M_{N-1}}\dots P_{M_1})^kv-P_{\cap_{j=1}^NM_j}v}_H\geq\phi(k).
\]
\end{enumerate}
\end{itemize}

\medskip

The implication of this result for the sequential form of the method of reflections is that it converges linearly as soon as the sum $\sum_{j=1}^NV_j$ is closed, that is if $V=\sum_{j=1}^NV_j$. Equivalent conditions, based on the notion of angle\footnote{According to the definition of Friedrichs \cite{Friedrichs:1937}, if $M_i$ and $M_j$ are closed subspaces in a Hilbert space $H$, the angle between $M_i$ and $M_j$ is the angle in $[0,\frac{\pi}{2}]$ whose cosine is defined by \[c(M_i,M_j)=\sup\left\{\abs{\left<x,y\right>}\,|\,x\in M_i\cap(M_i\cap M_j)^\bot,\ \norm{x}\leq 1,\ y\in M_j\cap(M_i\cap M_j)^\bot,\ \norm{y}\leq1\right\}.\] Another notion is that of the minimal angle between $M_i$ and $M_j$, given by Dixmier in \cite{Dixmier:1949}, which is the angle in $[0,\frac{\pi}{2}]$ whose cosine is defined by \[c_0(M_i,M_j)=\sup\left\{\abs{\left<x,y\right>}\,|\,x\in M_i,\ \norm{x}\leq 1,\ y\in M_j,\ \norm{y}\leq1\right\}.\] These two definitions are different if $M_i\cap M_j\neq\{0\}$, and they both coincide with (different) principal angles (as introduced by Jordan \cite{Jordan:1875}) if $H=\mathbb{C}^n$.} between subspaces, can be found in the literature\footnote{For $N=2$, it is known (see \cite{Deutsch:1985} for instance) that $c(M_1,M_2)<1$, where $c(M_1,M_2)$ denotes the cosine of the angle between the subspaces $M_1$ and $M_2$, if and only if $M_1+M_2$ is closed, if and only if ${M_1}^\bot+{M_2}^\bot$ is closed, if and only if $(M_1\cap(M_1\cap M_2)^\bot)+(M_2\cap(M_1\cap M_2)^\bot)$ is closed. For $N\geq3$, a generalization of the Friedrichs angle to several subspaces is introduced in \cite{Badea:2012} and it is shown in the same paper 
 that the method converges linearly if and only if $c(M_1,\dots,M_N)<1$, which is a weaker condition (see Example 4.5 in \cite{Badea:2012}) than the sufficient one, based on Theorems 2.1 in \cite{Deutsch:1997} and 4.1 in \cite{Badea:2012}, that one of the cosines $c_{ij}=c_0(M_i\cap(\cap_{\ell=1}^N M_\ell)^\bot,M_j\cap(\cap_{\ell=1}^N M_\ell)^\bot)$ of the Dixmier angles involving two subspaces $M_i$ and $M_j$ is strictly less than one.}, and it is worth noting that the linear convergence of the (sequential version of the) method of reflections was established in \cite{Luke:1989} using conditions on any two sub-collections of the set of involved subspaces and the notion of gap between these.

\smallskip

Note that, in the case of a linear convergence of the MAP, error bounds can be derived, leading to estimates for the sequential for of the method of reflections. Let us gather here some of the existing results on this topic. When $N=2$, one has $\dnorm{(P_{M_2}P_{M_1})^k-P_{M_1\cap M_2}}_{\mathscr{L}(H)}\leq c(M_1,M_2)^{2k-1}$, where $c(M_1,M_2)$ denotes the cosine of the angle between the subspaces $M_1$ and $M_2$. This result is due to Aronszajn (see \cite[Section~12]{Aronszajn:1950}) and has been rediscovered several times. It is also sharp (see \cite[Theorem 2]{Kayalar:1988}). For $N\geq3$, upper bounds were given by Smith, Solomon and Wagner \cite[Theorem 2.2]{Smith:1977}, Kayalar and Weinert \cite[Theorem 3]{Kayalar:1988}, and also Deutsch and Hundal \cite[Theorem 2.7]{Deutsch:1997}. In this case however, note that any error bound depending only on the angles between the various subspaces involved can never be sharp \cite[Example 3.7]{Deutsch:1997}. 
More recently, using the link between the MAP and the SSC methods, Xu and Zikatanov \cite{Xu:2002} obtained, under the assumption that the sum $\sum_{j=1}^N{M_j}^\perp$ is closed, the following \emph{equality}
\[
{\norm{P_{M_N}P_{M_{N-1}}\dots P_{M_1}-P_{\cap_{j=1}^NM_j}}_{\mathscr{L}(H)}}^2=\frac{c_0}{1+c_0},
\]
where
\[
c_0=\underset{\substack{v\in V\\\norm{v}_H=1}}{\sup}\,\underset{\substack{(v_1,\dots,v_N)\in{M_1}^\perp\times\dots\times{M_N}^\perp\\\sum_{k=1}^Nv_k=v}}{\inf}\,\textstyle\sum_{j=1}^N{\norm{(Id_H-P_{M_j})\sum_{k=j+1}^Nv_k}_H}^2.
\]

We may add that various ways of accelerating the MAP, using relaxation or symmetrization for instance, have been proposed and studied (see \cite{Bauschke:2003} and the references therein), and could be directly used on the sequential version of the method of reflections.

\medskip

Finally, because simultaneous projections correspond to alternating projections in the adequate product space (see \cite{Pierra:1984}), variants of the above results exist for the method of simultaneous projections, as a consequence of results in \cite{Bauschke:1996
,Bauschke:2009
}, and thus apply to the averaged parallel version of the method of reflections.


\subsection{The orthogonal case: practical examples}\label{practical cases}
It follows from the previous results that the orthogonality of the projection operators associated with the boundary value problem to solve is a sufficient condition for the convergence of the method, either in its sequential form or in an averaged version of its parallel form. As an application, we prove this property for problems in which the main differential operator is of elliptic type. More precisely, our attention is focused on examples that involve the Laplace and the Stokes operators. For each of them, we describe the extended boundary value problems and characterise the functional setting involved in the definition of the projection operators. In addition to proving the orthogonality of the projectors, we also show that the rate of convergence is linear, by establishing that the sum $\sum_{j=1}^NV_j$ is closed.

\subsubsection{The Laplace and Poisson equations}
In what follows, it is assumed that the domain $\Omega$ is a bounded, simply connected, open set of $\mathbb{R}^d$, with boundary $\partial\Omega$, containing $N$ simply connected open subdomains $O_j$, with respective boundaries $\partial O_j$, with $j$ in $\{1,\dots,N\}$. All the boundaries are supposed to be sufficiently smooth, twice continuously differentiable for instance.

We deal with boundary value problems of the form considered in Section \ref{framework}, for which the operator $\mathcal{L}$ is the negative Laplace operator and $\mathcal{B}_0$ is the trace operator on $\partial\Omega$. These choices respectively correspond to the Poisson equation if the datum $f$ is non-zero, else to the Laplace equation, with a homogeneous Dirichlet boundary condition on $\partial\Omega$. They have to be complemented with some conditions on the boundaries $\partial O_j$, with $j$ in $\{1,\dots,N\}$, of the objects, for which different choices are possible.

\paragraph{\textbf{Poisson problem with Dirichlet boundary conditions.}}
As in the boundary value problem appearing in Subsection~\ref{subsec:introRes}, we consider Dirichlet-type boundary conditions, meaning that, for any integer $i$ in $\{1,\dots,N\}$, the boundary operator $\mathcal{B}_i$ is the trace operator on $\partial O_i$. If the datum $f$ belongs to $H^{-1}(\Omega)$ and the boundary data $b_j$, with $j$ in $\{1,\dots,N\}$, are respectively in $H^{1/2}(\partial O_j)$, one may show, using the Lax--Milgram lemma, that the problem is well-posed in the space $\{u\in H^1(\Omega\setminus\cup_{j=1}^N\overline{O_j})\,|\,u_{|_{\partial\Omega}}=0\}$.

To extend the problem to the interior of the objects, we impose the following transmission conditions across the object boundaries
\begin{equation}\label{vanishing jump conditions, Laplace-Dirichlet}
[u^*]=0\text{ across }\partial O_j,\ j=1,\dots,N,
\end{equation}
where the brackets $[\,\cdot\,]$ denote the jump across the considered hypersurface. We may then set the exterior-interior transmission problem in the space $H=H_0^1(\Omega)$, and it follows from the surjectivity of the trace operator that there exists a lifting $\lifting$ in $H$ such that $\lifting=b_j$ on $\partial O_j$, for any integer $j$ in $\{1,\dots,N\}$. Setting
\[
-\Delta u^*=-\Delta\lifting\text{ in }\cup_{j=1}^NO_j,
\]
according to Remark \ref{remark interior problem}, we see that the solution of the resulting problem satisfies interior Poisson--Dirichlet problems in the objects coupled with an exterior Poisson--Dirichlet problem by the transmission conditions \eqref{vanishing jump conditions, Laplace-Dirichlet}. As a consequence, it exists and is uniquely defined in the space $H$. Finally, the system of equations satisfied by the initial approximation $u^{(0)}$,
\[
\left\{\begin{aligned}
-\Delta u^{(0)}=f\text{ in }\Omega,\\
u^{(0)}=0\text{ on }\partial\Omega,
\end{aligned}\right.
\]
defines a problem admitting a unique weak solution in $H$.

In addition, one may note that solutions to problem \eqref{exterior problem for w}-\eqref{interior problem for w} satisfying \eqref{vanishing jump conditions, Laplace-Dirichlet} vanish in $\cup_{j=1}^N\overline{O_j}$. This observation allows to characterise the Hilbert space framework associated with the method of reflections applied to the Poisson--Dirichlet problem.

\begin{proposition} 
Let $H=H_0^1(\Omega)$, equipped with the inner product
\[
a(u,v)=\int_\Omega\grad u(x)\cdot\grad v(x)\,\mathrm{d}x.
\]
For the Laplace operator completed with Dirichlet boundary conditions, the projectors $P_M$ and $P_V$ (resp. $P_{M_{i}}$ and $P_{V_{i}}$, with $i$ in $\{1,\dots,N\}$) introduced in Subsection~\ref{projection setting} are well-defined and orthogonal with respect to $a$, and their respective ranges are characterised as
\[ 
M=\left\{w\in H\,|\,w=0\text{ in }\cup_{j=1}^N\overline{O_j}\right\}\text{ and }V=\left\{v\in H\,|\,\Delta v=0\text{ in }\Omega\setminus\cup_{j=1}^N\overline{O_j}\right\}
\] 
\[ 
\text{(resp. }M_i=\left\{w\in H\,|\, w=0\text{ in } \overline{O_i}\right\}\text{ and }V_i=\left\{v\in H\,|\,\Delta v=0\text{ in }\Omega\setminus\overline{O_i}\right\},\ i=1,\dots,N\text{)}.
\] 
In addition, one has $V=\sum_{j=1}^NV_j$.
\end{proposition}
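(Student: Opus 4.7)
The plan is to establish four claims in turn: (i) $(H, a)$ is a Hilbert space; (ii) the subspaces $M_i$, $V_i$, $M$, $V$ are closed and characterized as stated; (iii) the projectors are orthogonal with respect to $a$; (iv) the sum $\sum_{j=1}^N V_j$ is already closed, hence equal to $V$. That $a$ defines an inner product on $H = H_0^1(\Omega)$ equivalent to the standard one is the Poincar\'e inequality; existence of $u^{(0)} \in H$ and of a lifting $g \in H$ of the boundary data (cf.\ Remark~\ref{rem:farbitrary}) follows from the Lax--Milgram lemma and the surjectivity of the trace, respectively.

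For (ii) I focus on the single-object case, the many-object one being identical. Given $u \in H$, the transmission problem \eqref{problem for w_i} coupled with the condition $[w]=0$ across $\partial O_i$ asks for $w \in H_0^1(\Omega)$ such that $w=0$ on $\partial O_i$ and $\Delta w = 0$ in $O_i$ (forcing $w=0$ in $O_i$ by uniqueness of the Dirichlet problem in $O_i$), with $u-w$ harmonic in $\Omega\setminus\overline{O_i}$. Such a $w$ exists and is unique by the well-posedness of the exterior Dirichlet problem in $\Omega\setminus\overline{O_i}$ with data $u|_{\partial O_i}$ on $\partial O_i$ and $0$ on $\partial\Omega$. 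This identifies $M_i$ and $V_i$ as claimed. Both are closed (directly: $M_i$ as the kernel of restriction to $O_i$, $V_i$ via the continuous linear condition $\int_\Omega \nabla u\cdot\nabla\varphi = 0$ for every $\varphi\in C_c^\infty(\Omega\setminus\overline{O_i})$), which recasts them as ranges of continuous projectors. The many-object analogue immediately gives $M = \cap_j M_j$. For (iii), orthogonality is a one-line integration by parts: for $v\in V_i$ and $w\in M_i$, the contribution of $O_i$ to $a(v,w)$ vanishes since $\nabla w = 0$ there, while the contribution of $\Omega\setminus\overline{O_i}$ vanishes by weak harmonicity of $v$ tested against $w|_{\Omega\setminus\overline{O_i}} \in H_0^1(\Omega\setminus\overline{O_i})$ (whose trace vanishes on both $\partial O_i$ and $\partial\Omega$). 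Hence $V_i = {M_i}^\perp$ and $P_{V_i} = Id_H - P_{M_i}$ is orthogonal; the same argument yields $V = M^\perp$, so that $\overline{\sum_j V_j} = (\cap_j M_j)^\perp = V$.

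The main obstacle is promoting this closure equality to (iv), the actual equality $\sum_j V_j = V$. My strategy is constructive. Given $v\in V$, I seek densities $g_j \in H^{1/2}(\partial O_j)$ and let $\hat v_j \in H$ denote the function harmonic in $\Omega\setminus\partial O_j$, vanishing on $\partial\Omega$, and with trace $g_j$ on $\partial O_j$ (well-defined by solving the Dirichlet problem separately in $O_j$ and in $\Omega\setminus\overline{O_j}$). Requiring $\sum_j \hat v_j = v$ on each $\partial O_i$ leads to the linear system $(Id_H + K)g = (v|_{\partial O_j})_j$ on $\prod_j H^{1/2}(\partial O_j)$, where $K$ has vanishing diagonal and off-diagonal blocks $g_j \mapsto \hat v_j|_{\partial O_i}$ for $i\neq j$. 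Since the $\partial O_i$ sit at positive distance from $\partial O_j$ and $\partial\Omega$, interior elliptic regularity makes these off-diagonal blocks smoothing, hence compact by Rellich, so $Id_H + K$ is Fredholm of index zero. For injectivity: if $(Id_H + K)g = 0$ then $w := \sum_j \hat v_j$ is harmonic in $\Omega\setminus\cup_j\partial O_j$ with zero trace on $\partial\Omega$ and on every $\partial O_i$; the maximum principle applied separately inside each $O_i$ and in $\Omega\setminus\cup\overline{O_j}$ forces $w\equiv 0$. Inspecting the jump $[\partial_n w]_{\partial O_i} = [\partial_n \hat v_i]_{\partial O_i} = 0$ (the other $\hat v_j$ are smooth across $\partial O_i$) then shows each $\hat v_i$ is globally harmonic in $\Omega$ with zero boundary datum, hence $\hat v_i = 0$ and $g_i = 0$. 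Surjectivity thus follows, giving traces $g_j$ for which $\sum_j \hat v_j = v$ on $\Omega\setminus\cup\overline{O_j}$. I recover equality on all of $\Omega$ by redefining $v_j := \hat v_j + h_j$, with $h_j \in H_0^1(O_j)$ (extended by zero) equal to $v - \sum_k \hat v_k$ on $O_j$; this $h_j$ does have zero trace on $\partial O_j$ by construction, $v_j$ still lies in $V_j$, and $v = \sum_j v_j$ in $\Omega$. This closes the proof.
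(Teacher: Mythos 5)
Your proof is correct. The first three steps (well-posedness of the transmission problems, the characterisations of $M_i$, $V_i$, $M$, $V$, and the orthogonality of the projectors via integration by parts against test functions vanishing on $\partial O_i$ and $\partial\Omega$) follow essentially the same route as the paper. Where you genuinely diverge is the final claim $V=\sum_{j=1}^NV_j$: the paper obtains the decomposition in one stroke by writing any $v\in V$ through the Green function $G$ of $\Omega$ as a sum over $j$ of a surface potential carried by $[\partial v/\partial\vec{n}]$ on $\partial O_j$ plus a volume potential of $\Delta v$ on $O_j$, each summand visibly lying in $V_j$; you instead construct single-object harmonic pieces $\hat v_j$ from unknown traces $g_j$ and solve the coupled boundary system $(Id+K)g=(v|_{\partial O_j})_j$ by the Fredholm alternative, with compactness of the off-diagonal blocks coming from interior elliptic regularity and injectivity from the maximum principle plus the vanishing of the normal-derivative jumps. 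Your injectivity and surjectivity arguments are sound, as is the final interior correction by $h_j\in H^1_0(O_j)$ (note that this correction plays the same role as the Newton-potential terms in the paper: both account for the fact that elements of $V_j$ are unconstrained inside $O_j$). The trade-off is that the paper's argument is shorter and fully explicit but leans on the distributional identification of $-\Delta v$ as surface plus volume sources, whereas yours is longer but more robust: it uses only solvability of one-object Dirichlet problems and standard Fredholm theory, and would adapt to settings where an explicit Green representation is less convenient.
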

\begin{proof}
Let us first deal with the projector $P_M$. For any $u$ in $H$, $w=P_M(u)$ is by definition the weak solution in $H$ to the following exterior-interior transmission problem 
\begin{equation}\label{problem for w, Laplace-Dirichlet}
\left\{\begin{aligned}
&-\Delta w=-\Delta u\text{ in }\Omega\setminus\cup_{j=1}^N\overline{O_j},\\
&w=0\text{ on }\partial\Omega,\\
&w=0\text{ on }\partial O_j,\ j=1,\dots,N,\\
&[w]=0\text{ across }\partial O_j,\ j=1,\dots,N,\\
&-\Delta w=0\text{ in }\cup_{j=1}^NO_j.
\end{aligned}\right.
\end{equation}
We observe that the restriction of $w$, solution to \eqref{problem for w, Laplace-Dirichlet}, to the exterior of the objects satisfies a problem for a Poisson equation with homogeneous Dirichlet boundary conditions, and that its restriction to the interior of a given object solves a problem for the Laplace equation with a homogeneous Dirichlet boundary condition. The above problem thus makes sense for $u$ and $w$ in 
 $H$, and is well-posed by virtue of the Lax--Milgram lemma. The well-posedness of the problem then implies that functions in the range of the projector vanish in the interiors of the objects, from which the characterisation of the subspace $M$ follows.

Using these facts and the weak formulation associated with the problem, one has
\begin{align*}
\forall u\in H,\ \forall w\in M,\ a(P_Mu,w)&=\int_{\Omega\setminus\cup_{j=1}^N\overline{O_j}}\grad P_Mu(x)\cdot\grad w(x)\,\mathrm{d}x\\
&=\left<-\Delta P_Mu,w\right>_{H^{-1}(\Omega\setminus\cup_{j=1}^N\overline{O_j}),H^1_0(\Omega\setminus\cup_{j=1}^N\overline{O_j})}\\
&=\left<-\Delta u,w\right>_{H^{-1}(\Omega\setminus\cup_{j=1}^N\overline{O_j}),H^1_0(\Omega\setminus\cup_{j=1}^N\overline{O_j})}\\
&=\int_{\Omega\setminus\cup_{j=1}^N\overline{O_j}}\grad u(x)\cdot\grad w(x)\,\mathrm{d}x\\
&=a(u,w),
\end{align*}
so that $P_M$ and $P_V=Id-P_M$ are orthogonal projectors.

Consider next the subset $V$ of $H$, orthogonal complement to $M$. One has that $P_V=Id_H-P_M$, so that, for any $u$ in $H$, the function $v=P_Vu$ is a weak solution to the following system
\[ 
\left\{\begin{aligned}
&-\Delta v=0\text{ in }\Omega\setminus\cup_{j=1}^N\overline{O_j},\\
&v=0\text{ on }\partial\Omega,\\
&v=u\text{ on }\partial O_j,\ j=1,\dots,N,\\
&[v]=0\text{ across }\partial O_j,\ j=1,\dots,N,\\
&-\Delta v=-\Delta u\text{ in }\cup_{j=1}^NO_j,
\end{aligned}\right.
\] 
yielding the characterisation of $V$.\\
The orthogonality of the projectors $P_{M_i}$ and $P_{V_i}$, and the characterizations of the respective associated subspaces $M_i$ and $V_i$, with $i$ in $\{1,\dots,N\}$, are obtained in the same way.

Let us finally show that $V=\sum_{j=1}^NV_j$. Any element $v$ of $V$ may be represented by the sum of double layer potentials and Newton potentials, that is
\[
\text{ a. e. }x\in\Omega\setminus\cup_{j=1}^{N}\partial O_j,\ v(x)=\sum_{j=1}^N\int_{\partial O_j}G(x,y)\left[\frac{\partial v}{\partial\vec{n}}\right](y)\,\mathrm{d}S(y)-\sum_{j=1}^N\int_{O_j}G(x,y)\Delta v(y)\,\mathrm{d}y,
\]
where the kernel $G$ is the Green function\footnote{We recall that the Green function $G$ is such that
\[
\forall(x,y)\in\Omega\times\Omega,\ x\neq y,\ G(x,y)=\Phi(y-x)-\varphi_x(y),
\]
the function $\Phi$ being the fundamental solution of the Laplace equation 
and $\varphi_x$ being a corrector function which, for a fixed $x$ in $\Omega$, satisfies $\Delta\varphi_x=0$ in $\Omega$ and $\varphi_x=\Phi(\cdot-x)$ on $\partial\Omega$. 
} of the Laplace equation for the region $\Omega$, and the various integrals are understood in the sense of duality products. This provides the decomposition $v=\sum_{j=1}^Nv_j$ by setting
\[
\forall i\in\{1,\dots,N\},\ v_i(x)=\int_{\partial O_i}G(x,y)\left[\frac{\partial v}{\partial\vec{n}}\right](y)\,\mathrm{d}S(y)-\int_{O_j}G(x,y)\Delta v(y)\,\mathrm{d}y.
\]
For each integer $i$ in $\{1,\dots,N\}$, it is then easily seen that $v_i$ belongs to $V_i$, concluding the proof.
\end{proof}

\medskip

The assumptions of Theorems~\ref{convergence sequential} and~\ref{convergence averaged parallel} being satisfied in view of the above result, both the sequential and averaged  parallel form of the method converge to the solution.

\begin{remark}
In the present case, the orthogonality between the subspaces $V$ and $M$ (resp. $V_i$ and $M_i$) may be related to the orthogonality that exists between gradients of functions that are harmonic in a regular bounded open set $\Omega$ in $\mathbb{R}^d$ and gradients of functions in $H^1_0(\Omega)$, with respect to the $L^2(\Omega,\mathbb{R}^d)$-inner product.
\end{remark}

\smallskip

\paragraph{\textbf{Laplace problem with boundary conditions of the fourth type.}} 
By this name, we refer to the nonlocal boundary conditions used in boundary value problem \eqref{Laplace equation, 4th type problem}-\eqref{boundary conditions, 4th type problem}-\eqref{surface charge conditions, 4th type problem}-\eqref{outer Dirichlet condition, 4th type problem} considered in Subsection \ref{subsec:introMob}, which correspond to homogeneous conditions for the boundary operators
\[ 
\mathcal{B}_{j}(u)=u_{|_{\partial O_j}}-\fint_{\partial O_j}u(x)\,\mathrm{d}S(x), \ j=1,\dots,N,
\] 
where $\fint_{\partial O_j}u(x)\,\mathrm{d}S(x)$ denotes the mean of the function $u$ on the hypersurface $\partial O_j$.

As for the preceding Poisson--Dirichlet problem, we extend the boundary value problem to the interior of the objects by continuity, imposing that
\[
[u^*]=0\text{ across }\partial O_j,\ j=1,\dots,N.
\]
We may then look for a solution $u^*$ in $H=H^1_0(\Omega)$. The boundary conditions on the objects being homogeneous, a suitable lifting is any function in $H$ having constant value on the boundaries of the objects, for instance $\lifting\equiv0$. Finally, as the given conditions on the boundaries of the objects correspond to a source term with support on these boundaries (see \cite{Luke:1989,Hofer:2018}), we consequently require the initialisation $u^{(0)}$ to be a function in $H$ satisfying
\[
-\Delta u^{(0)}=\sum_{j=1}^NQ_j\frac{\mathcal{H}^{d-1}_{|_{\partial O_j}}}{\mathcal{H}^{d-1}(\partial O_j)}\text{ in }\Omega
\]
Note that such a function exists, as one can be constructed by summing solutions to one-object problems. 

\smallskip

We have the following result.

\begin{proposition} 
Let $H=H_0^1(\Omega)$, equipped with the inner product
\[
a(u,v)=\int_\Omega\grad u(x)\cdot\grad v(x)\,\mathrm{d}x.
\]
For the Laplace operator completed with fourth-type boundary conditions, the projectors $P_M$ and $P_V$ (resp. $P_{M_{i}}$ and $P_{V_{i}}$, with $i$ in $\{1,\dots,N\}$) introduced in Subsection~\ref{projection setting} are well-defined and orthogonal with respect to $a$, and their respective ranges are characterised as
\[
M=\left\{w\in H\,|\,\forall j\in\{1,\dots,N\},\ \exists c_j\in\mathbb{R},\ w=c_j\text{ in }\overline{O_j}\right\}
\]
and
\begin{equation}\label{subspace V, fourth type laplace}
V=\left\{v\in H\,|\,\Delta v=0\text{ in }\Omega\setminus\cup_{j=1}^N\overline{O_j},\ \forall j\in\{1,\dots,N\},\ \int_{\partial O_j}\frac{\partial v}{\partial n}(x)\,\mathrm{d}S(x)=0\right\}.
\end{equation}
\begin{multline*} 
\text{(resp. }M_i=\left\{w\in H\,|\,\exists c_i\in\mathbb{R},\ w=c_i\text{ in }\overline{O_i}\right\}\\\text{ and }V_i=\left\{v\in H\,|\,\Delta v=0\text{ in }\Omega\setminus\overline{O_i},\ \int_{\partial O_i}\frac{\partial v}{\partial n}(x)\,\mathrm{d}S(x)=0\right\},\ i=1,\dots,N\text{)}.
\end{multline*}
In addition, one has $V=\sum_{j=1}^N V_j$.
\end{proposition}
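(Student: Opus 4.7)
The plan is to mirror the structure of the proof just given for the Poisson--Dirichlet case, adapted to the fact that the homogeneous fourth-type boundary condition $\mathcal{B}_j w=0$ forces $w$ to be a (possibly nonzero) constant on $\partial O_j$ rather than to vanish there. I would first make the multi-object exterior-interior transmission problem explicit, taking $-\Delta w=-\Delta u$ in $\Omega\setminus\cup_{j=1}^N\overline{O_j}$, $w$ constant (but unknown) on each $\partial O_j$, $[w]=0$ across $\partial O_j$, $-\Delta w=0$ in each $O_j$, and $w=0$ on $\partial\Omega$. Its natural variational formulation reads: \emph{find $w\in\widetilde{M}:=\{w\in H_0^1(\Omega)\mid w \text{ is constant on each }\overline{O_j}\}$ such that $a(w,m)=a(u,m)$ for every $m\in\widetilde{M}$.} The set $\widetilde{M}$ is closed in $H$ as the intersection over $j\in\{1,\dots,N\}$ of the kernels of the continuous linear maps $w\mapsto(\grad w)_{|_{O_j}}$ from $H_0^1(\Omega)$ to $L^2(O_j,\mathbb{R}^d)$, so the Riesz theorem yields existence, uniqueness and continuity of $w=P_M u$, providing the $a$-orthogonality of $P_M$ and $P_V=Id-P_M$ at the same time. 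Combining harmonicity inside $O_j$, a constant trace on $\partial O_j$, and continuity across $\partial O_j$ forces $w$ to be constant in $\overline{O_j}$, which identifies $\widetilde{M}$ with $M$.

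Next I would characterise $V=M^\perp$. For $v\in V$, testing against functions $m\in M$ supported in $\Omega\setminus\cup_j\overline{O_j}$ yields $\Delta v=0$ weakly there, while testing against a function $m\in M$ taking constant value $c_j$ on the $j$th object and $0$ on the others gives, after integration by parts using the harmonicity of $v$,
\[
0=a(v,m)=-\sum_{j=1}^N c_j\int_{\partial O_j}\frac{\partial v}{\partial n}(x)\,\mathrm{d}S(x),
\]
so that the arbitrariness of the $c_j$'s produces the flux conditions in \eqref{subspace V, fourth type laplace}; the converse inclusion follows by reversing the computation. The single-object spaces $M_i$ and $V_i$ are obtained by the same argument by keeping only the $i$th object.

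The decomposition $V=\sum_{j=1}^N V_j$ is the main obstacle, because, unlike in the Dirichlet case, elements of $V$ are not required to vanish inside the objects. For $v\in V$, using the Dirichlet Green function $G$ of $\Omega$ together with the distributional identity $-\Delta v=\sum_{j=1}^N\rho_j$ on $\Omega$, where $\rho_j=-\mathbf{1}_{O_j}\Delta v-\left[\frac{\partial v}{\partial n}\right]\delta_{\partial O_j}$, I would set
\[
v_j(x)=-\int_{O_j}G(x,y)\Delta v(y)\,\mathrm{d}y-\int_{\partial O_j}G(x,y)\left[\frac{\partial v}{\partial n}\right]\!(y)\,\mathrm{d}S(y),
\]
so that $v=\sum_{j=1}^N v_j$. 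Each $v_j$ belongs to $H_0^1(\Omega)$ and is harmonic in $\Omega\setminus\overline{O_j}$ by construction. The key point is that the total mass of $\rho_j$ equals $-\int_{\partial O_j}\frac{\partial v}{\partial n}\,\mathrm{d}S=0$, thanks to the defining flux condition of $V$, so that Green's identity with constant test function $1$ yields $\int_{\partial\Omega}\frac{\partial v_j}{\partial\nu}\,\mathrm{d}S=0$; the divergence theorem applied on $\Omega\setminus\overline{O_j}$ (where $v_j$ is harmonic) then forces $\int_{\partial O_j}\frac{\partial v_j}{\partial n}\,\mathrm{d}S=0$, so that $v_j\in V_j$, concluding the argument.
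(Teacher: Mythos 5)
Your proof is correct and follows essentially the same route as the paper's: the same variational identification of $M$ with the functions that are constant on each $\overline{O_j}$, the same test-function argument characterising $V=M^\perp$ (harmonicity from test functions vanishing in the objects, then the flux conditions from the arbitrary constants $c_j$), and the same Green-function representation yielding $V=\sum_{j=1}^N V_j$. The only notable differences are that you obtain the orthogonality of $P_M$ directly from the projection theorem on the closed subspace rather than by the paper's integration by parts, and that you spell out the verification that each $v_j$ satisfies the zero-flux condition (via the vanishing total mass of $\rho_j$ and the divergence theorem on $\Omega\setminus\overline{O_j}$), a step the paper leaves implicit.
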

\begin{proof}
We proceed as in the proof of the preceding proposition. For any $u$ in $H$, the function $w=P_Mu$ is the weak solution to the following system
\[ 
\left\{\begin{aligned}
&-\Delta w=-\Delta u\text{ in }\Omega\setminus\cup_{j=1}^N\overline{O_j},\\
&w=0\text{ on }\partial\Omega,\\
&\int_{\partial{O_j}}\frac{\partial w}{\partial n}(x)\,\mathrm{d}S(x)=\int_{\partial{O_j}}\frac{\partial u}{\partial n}(x)\,\mathrm{d}S(x), \ j=1,\dots,N,\\
&w_{|\partial O_j}-\fint_{\partial O_j}w(x)\,\mathrm{d}S(x)=0,\text{ on }\partial O_j,\ j=1,\dots,N,\\
&[w]=0\text{ across }\partial O_j,\ j=1,\dots,N,\\
&-\Delta w=0\text{ in }\cup_{j=1}^NO_j.
\end{aligned}\right.
\] 
Existence and uniqueness of a solution to the exterior problem are a well-known fact (see~\cite{Li:1989} for instance). The well-posedness of the interior Laplace--Dirichlet problem satisfied by the solution then implies that it is constant in each of the objects, a fact from which the characterisation of the space $M$ follows.

Let us now prove that that $P_M$ is an orthogonal projector. One has, integrating by parts and using the problem defining the projector,
\begin{align*}
\forall u\in H,\ \forall w\in M,\ a(P_Mu,w)&=\int_{\Omega\setminus\cup_{j=1}^N\overline{O_j}} \grad P_Mu(x)\cdot\grad w(x)\,\mathrm{d}x\\
&=\left<-\Delta P_Mu,w\right>_{H^{-1}(\Omega\setminus\cup_{j=1}^N\overline{O_j}),H^1_0(\Omega\setminus\cup_{j=1}^N\overline{O_j})}+\sum_{j=1}^N\left<\frac{\partial P_Mu}{\partial n},w\right>_{H^{-1/2}(\partial{O_j}),H^{1/2}(\partial{O_j})}\\
&=\left<-\Delta u,w\right>_{H^{-1}(\Omega\setminus\cup_{j=1}^N\overline{O_j}),H^1_0(\Omega\setminus\cup_{j=1}^N\overline{O_j})}+\sum_{j=1}^Nw_{|_{\overline{O_j}}}\int_{\partial{O_j}}\frac{\partial u}{\partial n}(x)\,\mathrm{d}S(x)\\
&=\int_{\Omega\setminus\cup_{j=1}^N\overline{O_j}}\grad u(x)\cdot\grad w(x)\,\mathrm{d}x\\
&=a(u,w),
\end{align*}
since the $w$ is constant on the boundaries of the objects.

To characterise the subspace $V$, let $v$ be an element of the set on the right hand side of the identity \eqref{subspace V, fourth type laplace}. For any function $w$ in $M$, one has
\[
a(v,w)=\left<-\Delta v,w\right>_{H^{-1}(\Omega\setminus\cup_{j=1}^N\overline{O_j}),H^1_0(\Omega\setminus\cup_{j=1}^N\overline{O_j})}+\sum_{j=1}^Nw_{|_{\overline{O_j}}}\int_{\partial{O_j}}\frac{\partial v}{\partial n}(x)\,\mathrm{d}S(x)=0
\]
Conversely, let $v$ be an element of the orthogonal complement of $M$. Then, for any function $w$ in $M$, one has
\[
0=a(v,w)=\left<-\Delta v,w\right>_{H^{-1}(\Omega\setminus\cup_{j=1}^N\overline{O_j}),H^1_0(\Omega\setminus\cup_{j=1}^N\overline{O_j})}+\sum_{j=1}^Nw_{|_{\overline{O_j}}}\int_{\partial{O_j}}\frac{\partial v}{\partial n}(x)\,\mathrm{d}(x)S.
\]
Choosing a function $w$ in $M$ which vanishes in the objects, one obtains that $-\Delta v=0$ in $\Omega\setminus\cup_{j=1}^N\overline{O_j}$ in a weak sense. One then concludes by using a function $w$ which vanishes in all the objects except the $j$th one and by varying the integer $j$ to obtain the remaining conditions.\\
Here again, the orthogonality of the projectors $P_{M_i}$, and the characterizations of the subspaces $M_i$ and their orthogonal complements $V_i$, with $i$ in $\{1,\dots,N\}$, are obtained similarly.

It remains to prove $V=\sum_{j=1}^NV_j$. One has
\[
\forall v\in V,\text{ a. e. }x\in\Omega\setminus\cup_{j=1}^{N}\partial O_j,\ v(x)=\sum_{j=1}^N\int_{\partial O_j}G(x,y)\left[\frac{\partial v}{\partial\vec{n}}\right](y)\,\mathrm{d}S(y)-\sum_{j=1}^N\int_{O_j}G(x,y)\Delta v(y)\,\mathrm{d}y,
\]
with
\[
\forall j\in\{1,\dots,N\},\ \int_{\partial O_j}\frac{\partial v}{\partial\vec{n}}(x)\,\mathrm{d}S(x)=0.
\]
Setting
\[
\forall i\in\{1,\dots,N\},\ v_i(x)=\int_{\partial O_i}G(x,y)\left[\frac{\partial v}{\partial\vec{n}}\right](y)\,\mathrm{d}S(y)-\int_{O_i}G(x,y)\Delta v(y)\,\mathrm{d}y
\]
then provides the adequate decomposition.
\end{proof}

\medskip

It follows from this result that Theorems~\ref{convergence sequential} and~\ref{convergence averaged parallel} apply, so that the sequential and averaged parallel versions converge.

\begin{remark}
Continuing with the electrostatics analogy started in Subsection \ref{subsec:introMob}, a function $v$ in the subspace $V_i$ is usually said to be the potential an \emph{electric dipole}, since the quantity
\[
\int_{\partial{O_i}}\frac{\partial v}{\partial n}(x)\,\mathrm{d}S(x),
\]
which corresponds to the total charge of the $i$th object, vanishes.
\end{remark}


\smallskip

\subsubsection{The mobility problem for the Stokes equations}\label{appl:Luke}
In \cite{Luke:1989}, Luke analysed the sequential form of the method of reflections and proved its convergence when applied to the solution of a system of equations modelling the motion of a sedimenting suspension in a container, the so-called mobility problem for the Stokes equations. It is similar to, but more complex than, the Laplace problem with boundary condition of the fourth type we previously dealt with. We show how this problem fits into the orthogonal projection framework previously introduced. Proofs of the statement are omitted as they can be found in \cite{Luke:1989} or adapted from \cite{Hoefer:2018,Niethammer}. Note that a similar analysis for the Stokes equations with Dirichlet boundary conditions and a particle configuration set in the whole space can be found in \cite{Hofer:2018}.

Denoting by $\Omega$ the container, that is, a bounded, connected, open set of $\mathbb{R}^3$ with a smooth boundary, and by $O_i\subset\Omega$, with $i$ in $\{1,\dots,N\}$, the rigid particles of arbitrary shape, which are connected open sets with smooth boundaries, such that their closures are non overlapping, the set $\cup_{j=1}^N O_j$ is called the solid phase of the suspension, while the set $\Omega\backslash\overline{\cup_{j=1}^N O_j}$ is called the fluid phase.

The problem is extended flow inside the particles by requiring the flow field to be continuous across the particle boundaries and the Stokes equations to also hold inside the particles, the inertialess motion of the rigid particles due to externally imposed forces and torques is then described by the fluid velocity field $\vec{u}$ and the pressure field $p$ satisfying the interior-exterior transmission problem
\begin{align*}
&-\mu\,\Delta u+\grad p=0\text{ in }\Omega\setminus\cup_{j=1}^N\overline{O_j},\\
&\div u=0\text{ in }\Omega\setminus\cup_{j=1}^N\overline{O_j},\\
&u=0\text{ on }\partial\Omega,\\
&\forall x\in\partial O_j,\ u(x)=U_{O_j}+\omega_{O_j}\times(x-x_{O_j}),\ j=1,\dots,N,\\
&\int_{\partial O_j}[\vec{\sigma}(u)]\cdot\vec{n}(y)\,\mathrm{d}S(y)=F_{O_j},\ j=1,\dots,N,\\
&\int_{\partial O_j}(y-x_{O_j})\times[\vec{\sigma}(u)]\cdot\vec{n}(y)\,\mathrm{d}S(y)=\tau_{O_j},\ j=1,\dots,N,\\
&[u]=0\text{ across }\partial O_j,\ j=1,\dots,N,\\
&-\mu\,\Delta u+\grad p=0\text{ in }\cup_{j=1}^N O_j,\\
&\div u=0\text{ in }\cup_{j=1}^NO_j,
\end{align*}
where $\mu$ is the kinematic viscosity coefficient (which will be set to $1$ in what follows for simplicity), $\vec{\sigma}(u)$ is the stress tensor, $\vec{\sigma}(u)=\mu\,\left(\grad u+(\grad u)^T\right)-p\,\vec{\mathrm{I}}$, $\vec{n}$ is the outward pointing unit normal vector to the particle boundaries, and the instantaneous linear and angular velocities $U_{O_i}$ and $\omega_{O_i}$ of the $i$th particle are unknowns in $\mathbb{R}^3$ to be determined (along with the fluid velocity and pressure), whereas the centres of mass $x_{O_i}$ of the particles, the total forces $F_{O_i}$ and the total torques $\tau_{O_i}$, with $i$ in $\{1,\dots,N\}$, applied to the particles are given\footnote{For instance, for suspensions sedimenting in a uniform gravitational field, one has $F_{O_i}=m_{O_i}\vec{G}$, where $m_{O_i}$ is the mass of the particle adjusted for buoyancy and $\vec{G}$ is the gravitational acceleration, and $\tau_{O_i}=0$.}. In the above system, the trivial flow is used as an admissible lifting of the homogeneous boundary data.

The variational formulation of the above problem allows to reduce it to that of finding solely the velocity field $\vec{u}$ in the space
\[
H=\vec{H}^1_{0,\sigma}(\Omega):=\{v\in H^1_0(\Omega,\mathbb{R}^3)\,|\,\div v=0\text{ in }\mathscr{D}'(\Omega)\},
\]
as the pressure $p$ can recovered (up to a constant) from it, which is equipped with the bilinear, symmetric, continuous and coercive form
\[
\forall(u,v)\in H\times H,\ a(u,v)=\int_{\Omega}\grad u(x):\grad v(x)\,\mathrm{d}x.
\]

As an initialisation, one chooses a field $u^{(0)}$ satisfying the equations of the above system except for the boundary conditions on the surface of the particles, meaning that associated the fluid flow will satisfy the constraints of the forces acting on the particles but will fail to have a rigid motion in the particles.

In such a setting, the elements of the subspace $M$ are the elements of $H$ which achieve a rigid motion in the particles, \textit{i.e.},
\[
M=\left\{w\in H\,|\,\exists U\in(\mathbb{R}^3)^N,\ \exists\omega\in(\mathbb{R}^3)^N,\ \forall j\in\{1,\dots,N\},\ \forall x\in\overline{O_j},\ w(x)=U_j+\omega_j\times(x-x_{O_j})\right\},
\]
the orthogonal complement of $M$ being
\begin{multline*}
V=\left\{v\in H\ |\ \exists p\in L^2(\Omega),\ \Delta v+\grad p=0\text{ in }\Omega\setminus\cup_{j=1}^N\overline{O_j},\right.\\\left.\forall j\in\{1,\dots,N\},\ \int_{\partial O_j}[\vec{\sigma}(v)]\cdot\vec{n}(y)\,\mathrm{d}S(y)=0\text{ and }\int_{\partial O_j}(y-x_{O_j})\times[\vec{\sigma}(v)]\cdot\vec{n}(y)\,\mathrm{d}S(y)=0\right\}.
\end{multline*}
It then follows that, for any integer $i$ in $\{1,\dots,N\}$,
\[
\forall i\in\{1,\dots,N\},\ M_i=\left\{w\in H\,|\,\exists U\in\mathbb{R}^3,\ \exists\omega\in\mathbb{R}^3,\ \forall x\in\overline{O_i},\ w(x)=U_i+\omega_i\times(x-x_{O_i})\right\},
\]
and
\begin{multline*}
\forall i\in\{1,\dots,N\},\ V_i=\left\{v\in H\ |\ \exists p\in L^2(\Omega),\ \Delta v+\grad p=0\text{ in }\Omega\setminus\overline{O_i},\right.\\\left.\int_{\partial O_i}[\vec{\sigma}(v)]\cdot\vec{n}(y)\,\mathrm{d}S(y)=0\text{ and }\int_{\partial O_i}(y-x_{O_i})\times[\vec{\sigma}(v)]\cdot\vec{n}(y)\,\mathrm{d}S(y)=0\right\},
\end{multline*}
the elements of the subspaces $V_i$ being the hydrodynamic analogues of the electric dipoles. The fact that the sum $\sum_{j=1}^N V_j$ is closed is established by Luke.

\begin{remark}
When $\Omega=\mathbb{R}^3$ and the particles are identical spheres of radius $R$, one can characterise in a more precise way both the projections onto $M_i$ and the functions in $V_i$, with $i$ in $\{1,\dots,N\}$, using closed-form expressions, like Stokes' law, to explicitly compute the drag force exerted on a particle (see \cite{Hoefer:2018,Niethammer}). In such a case, $H$ is the subspace $\vec{H}^1_{\sigma}$ of all divergence free functions in the homogeneous Sobolev space $\dot{H}^1(\mathbb{R}^3,\mathbb{R}^3)$ (defined as the closure of $\mathscr{C}^\infty_c(\mathbb{R}^3,\mathbb{R}^3)$ with respect to the $L^2$-norm of the gradient) and, for a given function $u$ in $H$, the projection of $u$ onto $M_i$ satisfies $P_{M_i}u(x)=U+\omega_{O_i}\times(x-x_{O_i})$ for any $x$ in $\overline{O_i}$, with
\[
U=\frac{1}{4\pi R^2}\int_{\partial O_i}u(x)\,\mathrm{d}S(x)\text{ and }\omega_{O_i}=\frac{3}{8\pi R^4}\int_{\partial O_i}(x-x_{O_i})\times u(x)\,\mathrm{d}S(x).
\]
Moreover, for any function $v$ in $V_i$, one has
\[
\int_{\partial O_i}v(x)\,\mathrm{d}S(x)=0\text{ and }\int_{\partial O_i}(x-x_{O_i})\times v(x)\,\mathrm{d}S(x)=0.
\]
\end{remark}

\subsection{An example of non-orthogonal case} 
We now consider a problem for the Laplace equation set in an unbounded domain, \textcolor{black}{each} object having either a Dirichlet or a Neumann condition imposed on its boundary. For such a configuration, we were unable to prove the orthogonality of the projection operators, and cases of divergence for the sequential form of the method are actually observed in numerical tests in dimension two (see Subsection \ref{divergence example}). This leads us to conjecture that the convergence theory proposed in the present work does not apply in this case, which is thus called ``non-orthogonal''.

Nevertheless, assuming as in Proposition~\ref{prop:banach} that $H$ is a Banach space, one can find sufficient conditions for the convergence of the sequential version of the method. To do this, we follow the approach used by Balabane in \cite{Balabane:2004} for the parallel form of the method of reflections, and prove that, under a certain geometrical condition, the series defining the approximation converges to the solution to the boundary value problem under consideration.

In the present subsection, this domain $\Omega$ is equal to $\mathbb{R}^3$ and the $N$ objects are disjoints compact sets in $\Omega$, with boundaries of class $\mathscr{C}^2$. Given two positive integers $N_D$ and $N_N$ such that $N=N_D+N_N$, the objects associated with a Dirichlet boundary condition are numbered from $1$ to $N_D$ and that the ones associated with a Neumann boundary condition are numbered form $N_D+1$ to $N$. Then, given $N_D$ functions $b^D_i$, $i=1,\dots,N_D$, and $N_N$ functions $b^N_i$, $i=1,\dots,N_N$, we look for a function $u$ satisfying
\begin{equation}\label{mixed problem in R3}
\begin{array}{l}
-\Delta u=0\text{ in }\mathbb{R}^3\setminus\cup_{j=1}^N\overline{O_j},\\
u=b^D_j\text{ on }\partial O_j,\ j=1,\dots,N_D,\\
\dfrac{\partial u}{\partial\vec{n}}=b^N_j\text{ on }\partial O_{N_D+j},\ j=1,\dots,N_N.
\end{array}
\end{equation}
Observe that \textcolor{black}{this} boundary value problem is an exterior one (it is set in the complement of a union of bounded sets), and necessitates the introduction of a weighted Sobolev space to properly define its solution (see \cite{Amrouche:1997} or \cite[XI, B]{Dautray:2000}). For any subset $\omega$ of $\Omega$, we set
\[
W(\omega)=\left\{v\,:\,\omega\to\mathbb{R}\,|\,\int_{\Omega}\frac{v^2(x)}{1+\norm{x}^2}\,\mathrm{d}x<+\infty,\ \grad v\in L^2(\omega,\mathbb{R}^3)\right\}.
\]

A first step in proving the convergence of the method is to establish some boundary estimates for the relections.

\begin{lemma}\label{lemma: estimate}
Consider the sequence of reflections $(u^{(k)}_i)_{k\in\mathbb{N},i\in\{1,\dots,N\}}$ generated by the sequential form of the method of reflections applied to the solution of problem \eqref{mixed problem in R3} and define the associated sequence of scalars
\[
\forall k\in\mathbb{N}^*,\ \forall(i,j)\in\{1,\dots,N\}^2,\ a^{(k)}_{i,j}=\dnorm{u^{(k)}_i}^2_{H^1(\partial O_j)}+\dnorm{\frac{\partial u^{(k)}_i}{\partial\vec{n}}}^2_{L^2(\partial O_j)}.
\]
Then, for any pair of distinct integers $i$ and $j$ in $\{1,\dots,N\}$, there exists a positive constant $\kappa_{i,j}$ depending on the geometry of the problem such that
\begin{equation}\label{ineq:i_est}
\forall k\in\mathbb{N}^*,\ a^{(k+1)}_{i,j}\leq N\kappa_{i,j}\left(\sum_{m=1}^{i-1} a^{(k+1)}_{m,i}+\sum_{m=i+1}^{N}a^{(k)}_{m,i}\right).
\end{equation}
\end{lemma}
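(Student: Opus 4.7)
The plan is to exploit the fact that each reflection $u^{(k+1)}_i$ is the (unique) solution in $W(\mathbb{R}^3\setminus\overline{O_i})$ of a single-object exterior Laplace problem, whose datum on $\partial O_i$ is, by construction of the sequential scheme, either the trace (Dirichlet case, $i\leq N_D$) or the normal derivative (Neumann case, $i>N_D$) of $-\sum_{m=1}^{i-1}u^{(k+1)}_m-\sum_{m=i+1}^Nu^{(k)}_m$. Since $\partial O_j$ lies at positive distance from $\partial O_i$, $u^{(k+1)}_i$ is harmonic on a tubular neighborhood of $\partial O_j$, so all Sobolev norms of its Cauchy data on $\partial O_j$ are controlled by a single integral norm of the data on $\partial O_i$. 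The full estimate will then follow by combining this \emph{separation estimate} with a triangle inequality and an elementary convexity bound on the datum itself.

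The key analytic step is a bound of the form
\[
\dnorm{u^{(k+1)}_i}^2_{H^1(\partial O_j)}+\dnorm{\frac{\partial u^{(k+1)}_i}{\partial\vec{n}}}^2_{L^2(\partial O_j)}\leq C_{i,j}\,\dnorm{\xi^{(k+1)}_i}^2_{X_i(\partial O_i)},
\]
where $\xi^{(k+1)}_i$ denotes the prescribed datum on $\partial O_i$ and $X_i$ stands for $H^1(\partial O_i)$ if $i\leq N_D$ or $L^2(\partial O_i)$ otherwise, the constant $C_{i,j}$ depending only on the geometries of $\partial O_i$, $\partial O_j$ and on $\operatorname{dist}(\partial O_i,\partial O_j)$. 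I would establish it via Green's third identity on $\mathbb{R}^3\setminus\overline{O_i}$: since the Newtonian kernel and all its derivatives are smooth and uniformly bounded for $x\in\partial O_j$, $y\in\partial O_i$, the single- and double-layer potentials of any $L^2$ density on $\partial O_i$ are $\mathscr{C}^\infty$-smooth on $\partial O_j$, with operator norms depending only on the geometric data. Combined with the continuity of the exterior Dirichlet-to-Neumann (resp. Neumann-to-Dirichlet) map on the boundary of a single regular object, stemming from the well-posedness of the corresponding exterior problem in $W(\mathbb{R}^3\setminus\overline{O_i})$, this gives the claimed bound in both cases.

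Once the separation estimate is available, bounding $\xi^{(k+1)}_i$ is immediate. Writing $\xi^{(k+1)}_i=-\sum_{m<i}\xi^{(k+1)}_{m,i}-\sum_{m>i}\xi^{(k)}_{m,i}$, where $\xi^{(k)}_{m,i}$ denotes the trace (resp. normal derivative) of $u^{(k)}_m$ on $\partial O_i$, each of the $N-1$ summands has its squared $X_i$-norm majorized by the corresponding $a^{(\cdot)}_{m,i}$ (by definition of the latter). The elementary inequality $\bigl(\sum_{\ell=1}^{N-1}\alpha_\ell\bigr)^2\leq(N-1)\sum_{\ell=1}^{N-1}\alpha_\ell^2\leq N\sum_{\ell=1}^{N-1}\alpha_\ell^2$ then produces exactly the factor $N$ appearing in~\eqref{ineq:i_est}, with $\kappa_{i,j}:=C_{i,j}$.

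I expect the main obstacle to be the separation estimate, and more precisely the need to control \emph{both} terms of $a^{(k+1)}_{i,j}$ (the $H^1$-trace and the $L^2$-normal derivative on $\partial O_j$) from a \emph{single} type of datum on $\partial O_i$, which changes according to whether $i\leq N_D$ or $i>N_D$. The interior regularity of harmonic functions away from their source, together with the smoothness of the layer-potential kernels in the separated regime, is what makes this asymmetry harmless; but it is the technical heart of the argument and the only place where the geometric constant $\kappa_{i,j}$ really enters.
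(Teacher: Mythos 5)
Your proposal is correct and follows essentially the same route as the paper's proof: an integral representation of the reflection whose kernel is smooth and uniformly bounded on the separated boundary $\partial O_j$, combined with the boundedness of the exterior Dirichlet-to-Neumann (resp.\ Neumann-to-Dirichlet) map on $\partial O_i$ to reduce both Cauchy data to the single prescribed datum, followed by Cauchy--Schwarz and the discrete convexity inequality to produce the factor $N$. The paper merely implements the representation step slightly more explicitly, by extending each reflection across $\partial O_i$ so that it becomes a pure single-layer (Dirichlet objects) or double-layer (Neumann objects) potential of a jump, which yields the explicit value of $\kappa_{i,j}$ in terms of the areas $S_i$, $S_j$ and the distance $d_{i,j}$.
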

\begin{proof}
First, we remark that, for any integer $k$ in $\mathbb{N}^*$, the datum of the boundary value problem defining a reflection is the trace (for a reflection associated to an object with a Dirichlet boundary condition) or the trace of the normal derivative (for a reflection associated to an object with a Neumann condition) on an interior curve of a sum of harmonic functions. It follows from Weyl's lemma on the interior regularity of harmonic functions and from results in Chapter 2 of \cite{Grisvard:1985} that these reflections enjoy smoothness properties which, using a trace continuity theorem (see Chapter 1 of \cite{Grisvard:1985} for instance), allow to show there exist positive constants $C_i$ depending only on the geometry such that
\begin{equation}\label{ineq:diri}
\forall k\in\mathbb{N}^*,\ \forall i\in\{1,\dots,N_D\},\ \dnorm{\frac{\partial u^{(k)}_i}{\partial\vec{n}}}_{L^2(\partial O_i)}\leq C_i\,\dnorm{u^{(k)}_i}_{H^1(\partial O_i)},
\end{equation}
and
\[ 
\forall k\in\mathbb{N}^*,\ \forall i\in\{N_D+1,\dots,N\},\ \dnorm{u^{(k)}_i}_{H^1(\partial O_i)}\leq C_i\,\dnorm{\frac{\partial u^{(k)}_i}{\partial\vec{n}}}_{L^2(\partial O_i)}.
\] 
From now on, we shall assume, without loss of generality, that $C_i\geq 1$, with $i$ in $\{1,\dots,N\}$.

Next, for any integers $k$ in $\mathbb{N}^*$and $i$ in $\{1,\dots,N\}$, the reflection $u^{(k)}_i$ is extended to the whole of $\Omega$ by requiring its extension $\tilde{u}^{(k)}_i$ to belong to the weighted Sobolev space $W(\Omega)$, to satisfy the same equations as $u^{(k)}_i$ in the complement of $O_i$ and the Laplace equation in $O_i$, and to have a vanishing jump across $\partial O_i$ if $1\leq i\leq N_D$, or a vanishing jump of its normal derivative across $\partial O_i$ if $N_D+1\leq i\leq N$. Note that the interior problem associated with a reflection satisfying a Neumann boundary condition is indeed well-posed, since its normal derivative has a zero mean value on $\partial O_i$. 
We may then define the space
\begin{multline*}
H=H=\Big\{v\in W(\Omega)\,|\,
-\Delta v=0\text{ in }\mathbb{R}^3\backslash\cup_{j=1}^N{\partial O_j},\ \left[v\right]=0\text{ on }\partial O_j,\ j=1,\dots,N_D,\\
\left[\frac{\partial v}{\partial\vec{n}	}\right]=0\text{ on }\partial O_j,\ \int_{\partial O_j}\frac{\partial v}{\partial\vec{n}}(y)\,\mathrm{d}S(y)=0\text{ and }\int_{\partial O_j}v_{|_{O_j}}(y)\,\mathrm{d}S(y)=0,\ j=N_D+1,\dots,N\Big\},
\end{multline*}
and its subspaces
\[
\forall i\in\{1,\dots,N\},\ V_i=\left\{w\in H\,|\,-\Delta w=0\text{ in }\Omega\setminus\partial O_i\right\}.
\]
Functions $v$ in $H$ have an explicit formulation in terms of their jumps on the boundaries $\partial O_i$, with $i$ in $\{1,\dots,N\}$, due to the following integral representation
\begin{equation}\label{eq:repre}
\forall x\in\Omega\setminus\cup_{j=1}^{N}\partial O_j,\ v(x)=\frac{1}{4\pi}\sum_{j=N_D+1}^N\int_{\partial O_j}\frac{(x-y)\cdot\vec{n}(y)}{\norm{x-y}^3}[v(y)]\,\mathrm{d}S(y)-\frac{1}{4\pi}\sum_{j=1}^{N_D}\int_{\partial O_j}\frac{1}{\norm{x-y}}\left[\frac{\partial v}{\partial\vec{n}}(y)\right]\,\mathrm{d}S(y),
\end{equation}
and the same goes for their gradient,
\begin{multline}\label{eq:dnrepre}
\forall x\in\Omega\setminus\cup_{j=1}^{N}\partial O_j,\ \grad v(x)=\frac{1}{4\pi}\sum_{j=N_D+1}^N\int_{\partial O_j}\frac{1}{\norm{x-y}^3}\left(\vec{n}(y)-3\frac{(x-y)\cdot\vec{n}(y)}{\norm{x-y}^2}(x-y)\right)[v(y)]\,\mathrm{d}S(y)\\+\frac{1}{4\pi}\sum_{j=1}^{N_D}\int_{\partial O_j}\frac{x-y}{\norm{x-y}^3}\left[\frac{\partial v}{\partial\vec{n}}(y)\right]\,\mathrm{d}S(y).
\end{multline}

\smallskip

For any integer $i$ in $\{1,\dots,N\}$, let us denote by $S_i$ the area of $\partial O_i$, that is $S_i=\mathcal{H}^2(\partial(O_i)$, and set, for any pair of distinct integers $i$ and $j$ in $\{1,\dots,N\}$, $d_{i,j}=\min_{(x,y)\in\partial O_i\times\partial O_j}\,\norm{x-y}$. We will deal differently with the reflection according to the type of boundary condition it satisfies on the associated object.

For a reflection associated with an object with an imposed Dirichlet boundary condition, that is for an integer $i$ in $\{1,\dots,N_D\}$, the jump condition across $\partial O_i$ yields
\[
\forall y\in\partial O_i,\ \left[\frac{\partial\tilde{u}^{(k+1)}_i}{\partial\vec{n}}(y)\right]=\frac{\partial u^{(k+1)}_i}{\partial\vec{n}}(y)+\sum_{m=1}^{i-1}\frac{\partial u^{(k+1)}_m}{\partial\vec{n}}(y)+\sum_{m=i+1}^{N}\frac{\partial u^{(k)}_m}{\partial\vec{n}}(y).
\]
On the other hand, since the function $\tilde{u}^{(k)}_i$ belongs to $V_i$, it follows from the integral representation formula \eqref{eq:repre} that
\[
\forall x\in\mathbb{R}^3\setminus\partial O_i,\ \tilde{u}^{(k+1)}_i(x)=-\frac{1}{4\pi}\int_{\partial O_i}\frac{1}{\norm{x-y}}\left[\frac{\partial\tilde{u}^{(k+1)}_i}{\partial\vec{n}}(y)\right]_{\partial O_i}\,\mathrm{d}S(y),
\]
hence
\begin{multline*}
\forall x\in\mathbb{R}^3\setminus\partial O_i,\ \tilde{u}^{(k+1)}_i(x)=-\frac{1}{4\pi}\int_{\partial O_i} \frac{1}{\norm{x-y}}\frac{\partial u^{(k+1)}_i}{\partial\vec{n}}(y)\,\mathrm{d}S(y)-\frac{1}{4\pi}\sum_{m=1}^{i-1}\int_{\partial O_i}\frac{1}{\norm{x-y}}\frac{\partial u^{(k+1)}_m}{\partial\vec{n}}(y)\,\mathrm{d}S(y)\\
-\frac{1}{4\pi}\sum_{m=i+1}^{N}\int_{\partial O_i}\frac{1}{\norm{x-y}}\frac{\partial u^{(k)}_m}{\partial\vec{n}}(y)\,\mathrm{d}S(y).
\end{multline*}
By means of the Cauchy--Schwarz inequality, one has, for any integer $j$ in $\{1,\dots,N\}$ distinct from $i$,
\[
\forall x\in\partial O_j,\ \abs{u^{(k+1)}_i(x)}\leq\frac{\sqrt{S_i}}{4\pi d_{i,j}}\left(\dnorm{\frac{\partial u^{(k+1)}_i}{\partial\vec{n}}}_{L^2(\partial O_i)}+\sum_{m=1}^{i-1}\dnorm{\frac{\partial u^{(k+1)}_m}{\partial\vec{n}}}_{L^2(\partial O_i)}+\sum_{m=i+1}^{N} \dnorm{\frac{\partial u^{(k)}_m}{\partial\vec{n}}}_{L^2(\partial O_i)}\right),
\]
so that using \eqref{ineq:diri}, squaring both sides of the inequality and integrating over $\partial O_j$, we obtain
\begin{equation}\label{eqD1}
\dnorm{u^{(k+1)}_i}^2_{L^2(\partial O_j)}\leqslant\frac {N S_iS_jC_i^2}{(4\pi)^2 d_{i,j}^2}\left(\sum_{m=1}^{i-1}a^{(k+1)}_{m,i}+\sum_{m=i+1}^{N}a^{(k)}_{m,i}\right).
\end{equation}

In addition, repeating these computations starting from~\eqref{eq:dnrepre}, we get similar estimates the normal and tangential derivative traces,
\begin{align}\label{eqD2}
\dnorm{\frac{\partial u^{(k+1)}_i}{\partial\vec{n}}}^2_{L^2(\partial O_j)}&\leq\frac {NS_iS_jC_i^2}{(4\pi)^2 d_{i,j}^4}\left(\sum_{m=1}^{i-1} a^{(k+1)}_{m,i}+\sum_{m=i+1}^{N}a^{(k)}_{m,i}\right),\\
\dnorm{\frac{\partial u^{(k+1)}_i}{\partial\vec{\tau}}}^2_{L^2(\partial O_j)}&\leq\frac{NS_iS_jC_i^2}{(4\pi)^2d_{i,j}^4}\left(\sum_{m=1}^{i-1} a^{(k+1)}_{m,i}+\sum_{m=i+1}^{N}a^{(k)}_{m,i}\right).\label{eqD2bis}
\end{align}

Likewise, for a reflection $\tilde{u}^{(k)}_i$ satisfying a Neumann boundary condition on $\partial O_i$, that is for any integer $i$ in $\{N_D+1,\dots,N\}$, one has, one has
\[
\forall y\in\partial O_i,\ \left[\tilde{u}^{(k+1)}_i(y)\right]=u^{(k+1)}_i(y)+\sum_{m=1}^{i-1} u^{(k+1)}_m(y)+\sum_{m=i+1}^{N}u^{(k)}_m(y)+\lambda_i^{(k+1)},
\]
where $\lambda_i^{(k+1)}$ is a real number chosen in such a way that the condition \[\int_{\partial O_i}\tilde{u}^{(k+1)}_i{}_{|_{O_i}}(y)\,\mathrm{d}S(y)=0\] is satisfied. The integral representation formula then gives
\begin{multline*}
\forall x\in\mathbb{R}^3\setminus\partial O_i,\ \tilde{u}^{(k+1)}_i(x)=\frac{1}{4\pi}\int_{\partial O_i}\frac{(x-y)\cdot\vec{n}(y)}{\norm{x-y}^3} u^{(k+1)}_{i}(y)\,\mathrm{d}S(y)+\frac{1}{4\pi}\sum_{m=1}^{i-1}\int_{\partial O_i} \frac{(x-y)\cdot\vec{n}(y)}{\norm{x-y}^3}u^{(k+1)}_m(y)\,\mathrm{d}S(y)\\
+\frac{1}{4\pi}\sum_{m=i+1}^{N}\int_{\partial O_i}\frac{(x-y)\cdot\vec{n}(y)}{\norm{x-y}^3}u^{(k)}_m(y)\,\mathrm{d}S(y)+\frac{\lambda_i^{(k+1)}}{4\pi}\int_{\partial O_i}\frac{(x-y)\cdot\vec{n}(y)}{\norm{x-y}^3}\,\mathrm{d}S(y).
\end{multline*}
For any integer $j$ in $\{1,\dots,N\}$ distinct from $i$, one then has
\[
\forall x\in\partial O_j,\ \abs{u^{(k+1)}_i(x)}\leq\frac{\sqrt{S_i}}{4\pi d_{i,j}^2}\left(\dnorm{u^{(k+1)}_i}_{L^2(\partial O_i)}+\sum_{m=1}^{i-1}\dnorm{u^{(k+1)}_m}_{L^2(\partial O_i)}+\sum_{m=i+1}^{N}\dnorm{u^{(k)}_m}_{L^2(\partial O_i)}\right),
\]
from which one gets
\begin{equation}\label{eqD3}
\dnorm{u^{(k+1)}_i}^2_{L^2(\partial O_j)}\leq\frac{NS_iS_jC_i^2}{(4\pi)^2 d_{i,j}^4}\left(\sum_{m=1}^{i-1} a^{(k+1)}_{m,i}+\sum_{m=i+1}^{N}a^{(k)}_{m,i}\right).
\end{equation}
In the same manner, one may obtain the following estimates
\begin{align}\label{eqD4}
\dnorm{\frac{\partial u^{(k+1)}_i}{\partial\vec{n}}}^2_{L^2(\partial O_j)}&\leq\frac{NS_iS_jC_i^2}{\pi^2 d_{i,j}^6}\left(\sum_{m=1}^{i-1} a^{(k+1)}_{m,i}+\sum_{m=i+1}^{N}a^{(k)}_{m,i}\right),\\
\dnorm{\frac{\partial u^{(k+1)}_i}{\partial\vec{\tau}}}^2_{L^2(\partial O_j)}&\leq\frac{NS_iS_jC_i^2}{\pi^2d_{i,j}^6}\left(\sum_{m=1}^{i-1}a^{(k+1)}_{m,i}+\sum_{m=i+1}^Na^{(k)}_{m,i}\right).\label{eqD4bis}
\end{align}
Finally, summing estimates \eqref{eqD1}, \eqref{eqD2} and \eqref{eqD2bis} on the one hand, and estimates \eqref{eqD3}, \eqref{eqD4} and \eqref{eqD4bis} on the other hand, setting
\begin{equation}\label{def:kappa}
\kappa_{i,j}=\frac{S_iS_j{C_i}^2}{(4\pi)^2}\min\left\{\frac{1}{d_{i,j}^2}+\frac{2}{d_{i,j}^4},\frac{1}{d_{i,j}^4}+\frac{16}{d_{i,j}^6}\right\},
\end{equation}
and using the fact that the extension $\tilde{u}^{(k)}_i$ coincides with $u^{(k)}_i$ outside of $O_i$, we easily see that the claim holds whatever the type of condition imposed on the object boundary.
\end{proof}

\smallskip

A convergence criterion can now be stated.

\begin{theorem}
Let $\kappa(N)=\max\{\kappa_{i,j}\,|\,(i,j)\in\{1,\dots,N\}^2,\ i\neq j\}$, $\kappa_{i,j}$ being defined by \eqref{def:kappa}, and assume that
\begin{equation}\label{ineq:cvcond}
N(N-1)\kappa(N)<1.
\end{equation}
Then, \textcolor{black}{for any integer $i$ in $\{1,\dots,N\}$, the series $\sum_{k=0}^{+\infty}u_i^{(k)}$, generated by the sequential form of the method of reflections applied to the solution of problem \eqref{mixed problem in R3}, converges in $W(\Omega\setminus\overline{O_i})$, and its limits $u_i$ is such that the restriction of the sum $\sum_{i=1}^Nu_i$ to $\Omega\setminus\cup_{j=1}^N\overline{O_j}$ is the unique solution to problem \eqref{mixed problem in R3} in $W(\Omega\setminus\cup_{j=1}^N\overline{O_j})$.}
\end{theorem}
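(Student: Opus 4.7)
The idea is to bootstrap the estimate of Lemma \ref{lemma: estimate} into a geometric decay of the boundary traces of the reflections, transfer this decay to the weighted Sobolev norm via the well-posedness of the single-object exterior problems, and conclude by absolute convergence together with a limit passage in the correction formula of Lemma \ref{correction formula lemma}.

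The first step is to establish geometric decay of the scalar $y_k := \max_{i\neq j} a^{(k)}_{i,j}$. Observing that the right-hand side of \eqref{ineq:i_est} is a sum of exactly $(i-1)+(N-i)=N-1$ non-negative terms, each bounded by $\max(y_k,y_{k+1})$, one obtains, for any $i\neq j$ and any $k\geq 1$,
\[
a^{(k+1)}_{i,j} \leq N(N-1)\kappa(N)\,\max(y_k,y_{k+1}) = \alpha\,\max(y_k,y_{k+1}),
\]
with $\alpha = N(N-1)\kappa(N)<1$ by assumption. Taking the maximum over $(i,j)$ with $i\neq j$ yields $y_{k+1}\leq \alpha\,\max(y_k,y_{k+1})$; since $\alpha<1$, this forces $y_{k+1}\leq \alpha\,y_k$, and iteration gives $y_k\leq \alpha^{k-1} y_1$. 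The scalar $y_1$ is finite by interior regularity of harmonic functions: each initial reflection $u^{(1)}_i$ is smooth in a neighbourhood of every $\partial O_j$ with $j\neq i$.

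The second step exploits the classical well-posedness of exterior Dirichlet and Neumann problems for the Laplace equation in the weighted Sobolev space $W$ (see \cite{Amrouche:1997}): for some constants $C_i>0$, one has
\[
\|u^{(k+1)}_i\|_{W(\Omega\setminus\overline{O_i})} \leq C_i\,\|\mathcal{B}_i u^{(k+1)}_i\|_\ast,
\]
where $\|\cdot\|_\ast$ denotes $\|\cdot\|_{H^{1/2}(\partial O_i)}$ if $i\leq N_D$ and $\|\cdot\|_{H^{-1/2}(\partial O_i)}$ if $i>N_D$. Combined with the continuous embeddings $H^1(\partial O_i)\hookrightarrow H^{1/2}(\partial O_i)$ and $L^2(\partial O_i)\hookrightarrow H^{-1/2}(\partial O_i)$, and the explicit form of the boundary datum in \eqref{iterseq}, this bounds the right-hand side by $(N-1)\sqrt{y_k}$ for $k\geq 1$, yielding $\|u^{(k+1)}_i\|_{W(\Omega\setminus\overline{O_i})} \leq C\,\alpha^{(k-1)/2}$. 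Since $\sqrt{\alpha}<1$, the series $\sum_{k\geq 1}u^{(k)}_i$ converges absolutely in the Banach space $W(\Omega\setminus\overline{O_i})$ to some limit $u_i$.

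It remains to identify $u := \sum_{i=1}^N u_i$, restricted to $\Omega\setminus\cup_{j=1}^N\overline{O_j}$, as the unique solution. Absolute convergence and finiteness of the outer sum allow the exchange $\sum_k\sum_i u^{(k)}_i = \sum_i\sum_k u^{(k)}_i$, so $u$ is the limit of $(u^{(k)})$ in $W(\Omega\setminus\cup_{j=1}^N\overline{O_j})$. Harmonicity outside the objects is preserved under the norm limit, and the prescribed boundary conditions are recovered by passing to the limit in the correction formula \eqref{correction formula sequential} of Lemma \ref{correction formula lemma}, using the continuity of the trace operators and the fact that each $u^{(k)}_j\to 0$. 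Uniqueness in $W(\Omega\setminus\cup_{j=1}^N\overline{O_j})$ is classical for mixed exterior problems in weighted Sobolev spaces, which completes the argument.

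The main obstacle is the first step: Lemma \ref{lemma: estimate} has a Gauss--Seidel character, and a naive $\ell^1$-summation of the inequalities loses a factor $N-1$, yielding convergence only under the stronger hypothesis $\alpha(N-1)<1$. Passing to the maximum norm as above circumvents this loss and restores exactly the sharp criterion $\alpha<1$ stated in the theorem.
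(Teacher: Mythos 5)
Your proof is correct and follows the same overall strategy as the paper: bootstrap the recursion of Lemma \ref{lemma: estimate} into geometric decay of the boundary quantities, transfer that decay to the weighted Sobolev norm, and identify the limit via the correction formula. The one place where you genuinely diverge is the extraction of the geometric decay. The paper flattens the double index into a single sequence $\alpha_{i+(k-1)N}=\max_{j\neq i}a^{(k)}_{i,j}$ and runs a block-maximum argument with a case analysis on $m_0=\arg\max_{m\in\{0,\dots,N-1\}}\alpha_{\ell-m}$, concluding that the maximum over each window of $N$ consecutive indices contracts by the factor $N(N-1)\kappa(N)$ from one cycle to the next. Your per-cycle maximum $y_k=\max_{i\neq j}a^{(k)}_{i,j}$ together with the self-improving inequality $y_{k+1}\leq\alpha\max(y_k,y_{k+1})\Rightarrow y_{k+1}\leq\alpha y_k$ (valid since $\alpha<1$ and the $y_k$ are finite and non-negative) reaches the same conclusion with the same constant, and is cleaner: it absorbs the Gauss--Seidel coupling in one line rather than through the $\arg\max$ bookkeeping. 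Your intermediate step quantifying the passage from boundary traces to the $W$-norm via the a priori estimates for exterior problems is also more explicit than the paper's, which simply asserts that convergence on the boundaries implies convergence in $W(\Omega\setminus\overline{O_i})$. No gaps.
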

\begin{proof}
Define the sequence $(\alpha_\ell)_{\ell\in\mathbb{N}^*}$ by
\[
\forall k\in\mathbb{N}^*,\ \forall i\in\{1,\dots,N\},\ \alpha_{i+(k-1)N}=\max_{j\in\{1,\dots,N\}\setminus\{i\}}\,a^{(k)}_{i,j},
\]
the coefficients $a_{i,j}^{(k)}$ being defined in Lemma \ref{lemma: estimate}. Setting $\ell=i+kN$ for any integers $k$ in $\mathbb{N}^*$ and $i$ in $\{1,\dots,N\}$, one has, owing to \eqref{ineq:i_est}, 
\[
\alpha_\ell\leq N(N-1)\kappa(N)\underset{m\in\{1,\dots,N-1\}}{\max}\,\alpha_{\ell-m}\leq N(N-1)\kappa(N)\underset{m\in\{0,\dots,N-1\}}{\max}\,\alpha_{\ell-m}.
\]
For $\ell>2N$, denoting $m_0=\underset{m\in\{0,\dots,N-1\}}{\arg\max}\,\alpha_{\ell-m}$, one then gets
\[
\alpha_{\ell-m_0}\leq N(N-1)\kappa(N)\underset{m\in\{0,\dots,N-1\}}{\max}\,\alpha_{\ell-m_0-m},
\]
and, due to condition \eqref{ineq:cvcond},
\[
N(N-1)\kappa(N)\underset{m\in\{0,\dots,N-1-m_0\}}{\max}\,\alpha_{\ell-m_0-m}<\underset{m\in\{m_0,\dots,N-1\}}{\max}\,\alpha_{\ell-m}\leq\alpha_{\ell-m_0}.
\]
This implies that
\[
\alpha_{\ell-m_0}\leq N(N-1)\kappa(N)\max_{m=N-m_0,\dots,N-1}\,\alpha_{\ell-m_0-m}\leq N(N-1)\kappa(N)\max_{m=N,\cdots,N-1+m_0}\,\alpha_{\ell-m},
\]
so that finally
\[
\underset{m\in\{0,\dots,N-1\}}{\max}\,\alpha_{\ell-m}\leq N(N-1)\kappa(N)\max_{m=0,\cdots,N-1}\,\alpha_{\ell-N-m}.
\]
As a consequence, for any integer $i$ in $\{1,\dots,N\}$, the series $\sum_{k\in\mathbb{N}^*} u_i^{(k)}$ is convergent on the boundary of $\Omega\setminus\overline{O_i}$, and thus in $W(\Omega\setminus\cup_{j=1}^N\overline{O_j})$. Moreover, one can check that its limit $u_i$ satisfies
\[
\left\{\begin{array}{l}
-\Delta u_i=0\text{ in }\Omega\setminus\cup_{j=1}^N\overline{O_j},\\
u_i=U_i-\sum_{j=1}^{i-1}u_j-\sum_{j=i+1}^Nu_j\text{ on }\partial O_i,
\end{array}\right.
\]
if the integer $i$ belongs to $\{1,\dots,N_D\}$, or 
\[
\left\{\begin{array}{l}
-\Delta u_i=0\text{ in }\Omega\setminus\cup_{j=1}^N\overline{O_j},\\
\dfrac{\partial u_i}{\partial\vec{n}}=W_{i-N_D}-\sum_{j=1}^{i-1}\dfrac{\partial u_j}{\partial\vec{n}}-\sum_{j=i+1}^N\dfrac{\partial u_j}{\partial\vec{n}}\text{ on }\partial O_i,
\end{array}\right.
\]
if $i$ belongs to $\{N_D+1,\dots,N\}$. Summing the restrictions to $\Omega\setminus\cup_{j=1}^N\overline{O_j}$ of these limits, one concludes using linearity.
\end{proof}

\begin{remark}
A similar analysis could be carried out when Robin boundary conditions are imposed on the objects, that is
\[
\mathcal{B}_i(u)=\dfrac{\partial{u}}{\partial\vec{n}}+\beta_i\,u,\ i=1,\dots,N,
\]
where the coefficients $\beta_i$, with $i$ in $\{1,\dots,N\}$, are real numbers, giving rise to sufficient convergence condition. Indeed, one can see that Neumann boundary conditions correspond to Robin conditions with $\beta_i=0$, while Dirichlet conditions amount to the limiting case of $\beta_i$ tending to $+\infty$. However, the question of the unconditional convergence of the sequential form (or of the averaged parallel form) of the method in the case of Robin boundary conditions using the same coefficient $\beta$ for each object is, as far as we know, open.
\end{remark}

\section{Numerical experiments}\label{Numerics}
In this short section, we aim at confirming numerically the theoretical results obtained in the paper and investigating cases not handled by the previous analysis. More involved numerical tests of the method can be found in \cite{Ciaramella:2019}.

The problems solved numerically by the method of reflections are for the Laplace equation in some interior and exterior domains of $\mathbb{R}^2$, with both Dirichlet and Neumann boundary conditions imposed. A publicly available MATLAB package\footnote{Integral Equation Solver (\url{http://www.mathworks.com/matlabcentral/fileexchange/34241}) by Alexandre Munnier and Bruno Pin\c{c}on, MATLAB Central File Exchange. Retrieved February 15, 2016.} was used for the numerical computations, which relied on an integral formulation of the problem solved by the Nystr\"om method.

Note that, for problems set in bounded domains, similar results (which are not reprinted here) were obtained with a finite element code
. However, employing this type of discretisation method with the method of reflections is not a sensible choice, as the computational effort required to solve any of the one-object problem is always higher than that of the \textcolor{black}{many-object one}, due to the fact that the mesh used for the problem is a subset of the meshes used for the subproblems (see Figure \ref{meshes} for an illustration). This observation emphasizes the fact that, as a boundary decomposition method, the method of reflections is, in practice, better suited to discretisation methods based on boundary integral representation of the solution.

\begin{figure}[h]
\centering
\includegraphics[width=0.24\textwidth]{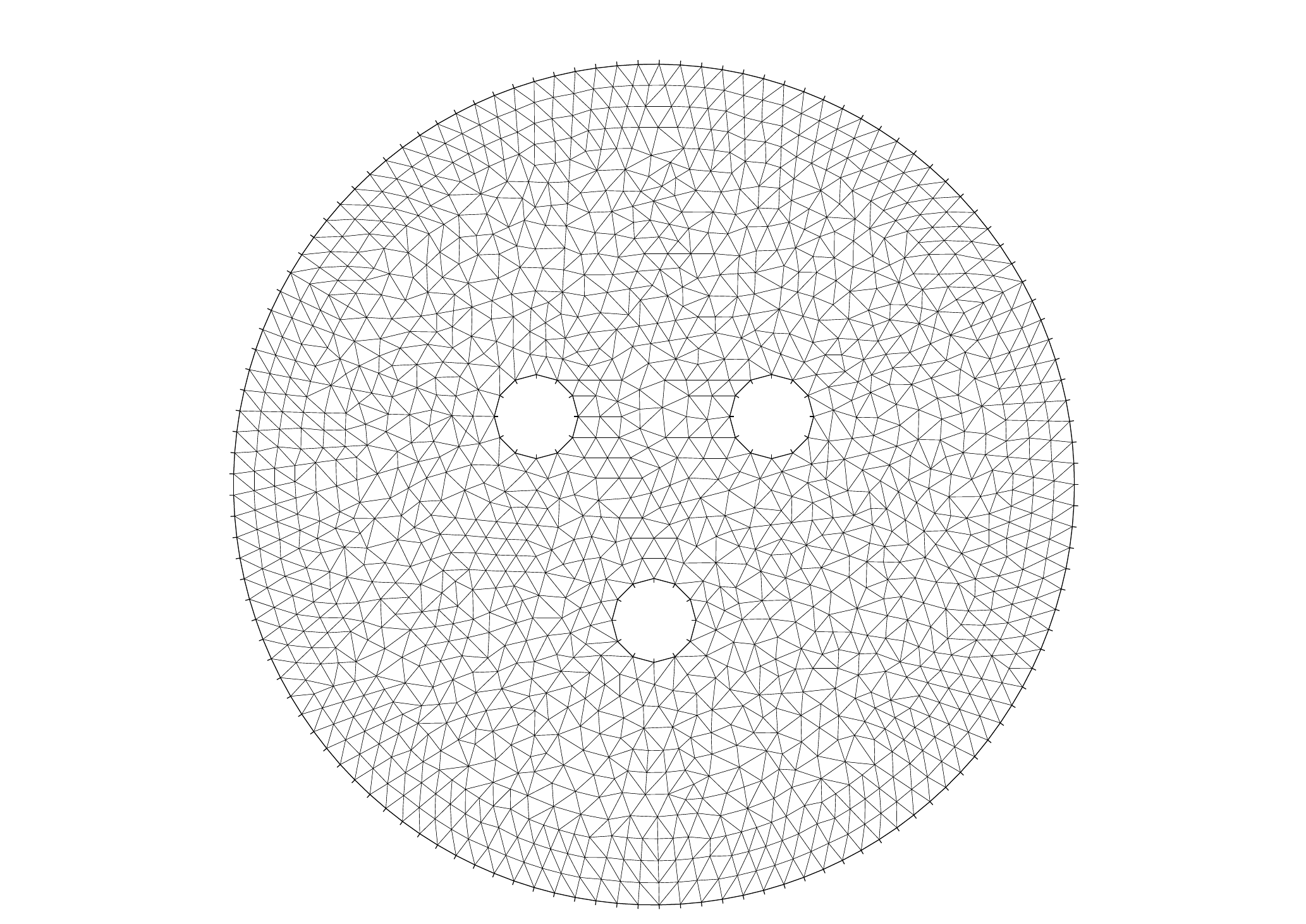}
\includegraphics[width=0.24\textwidth]{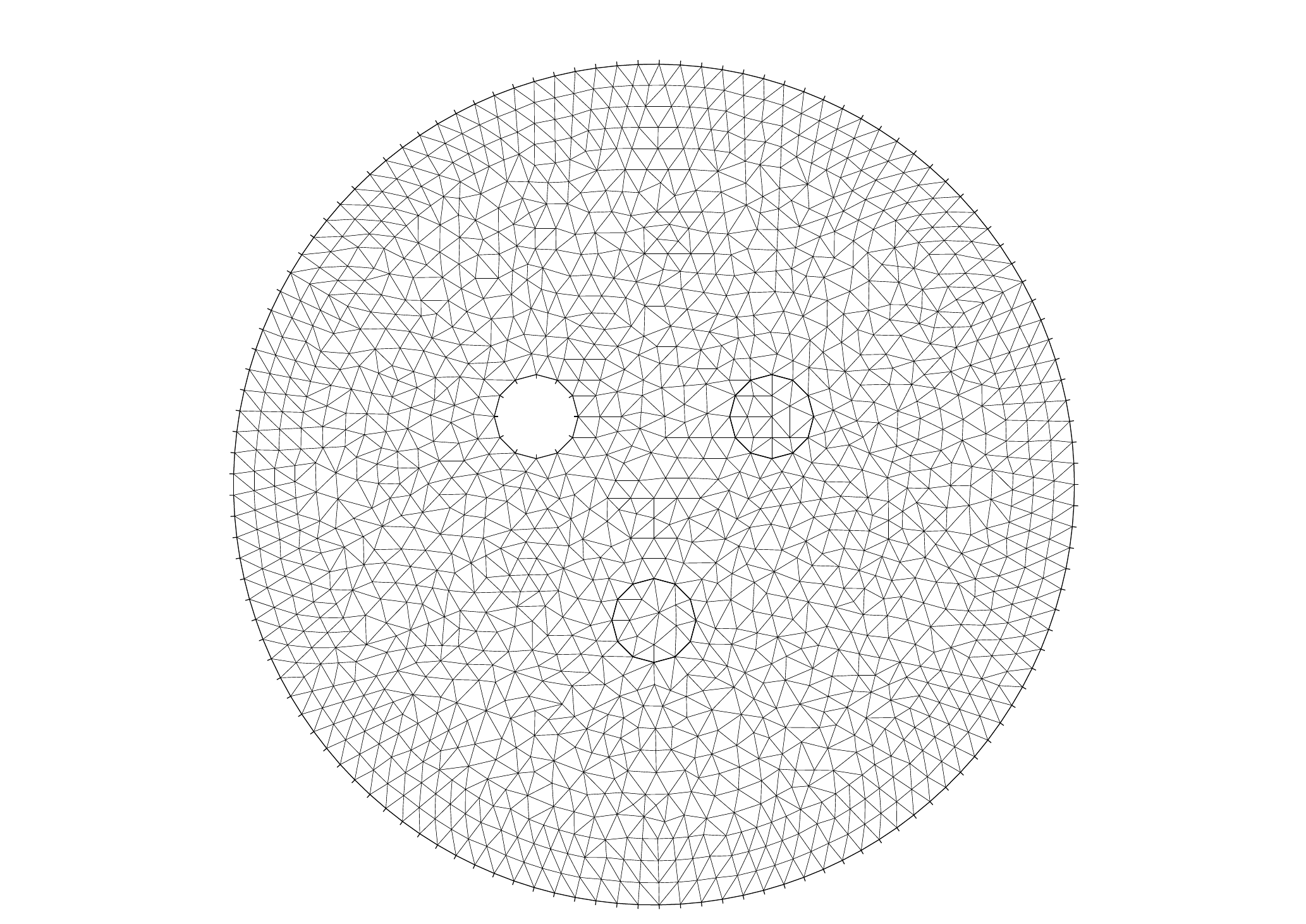}
\includegraphics[width=0.24\textwidth]{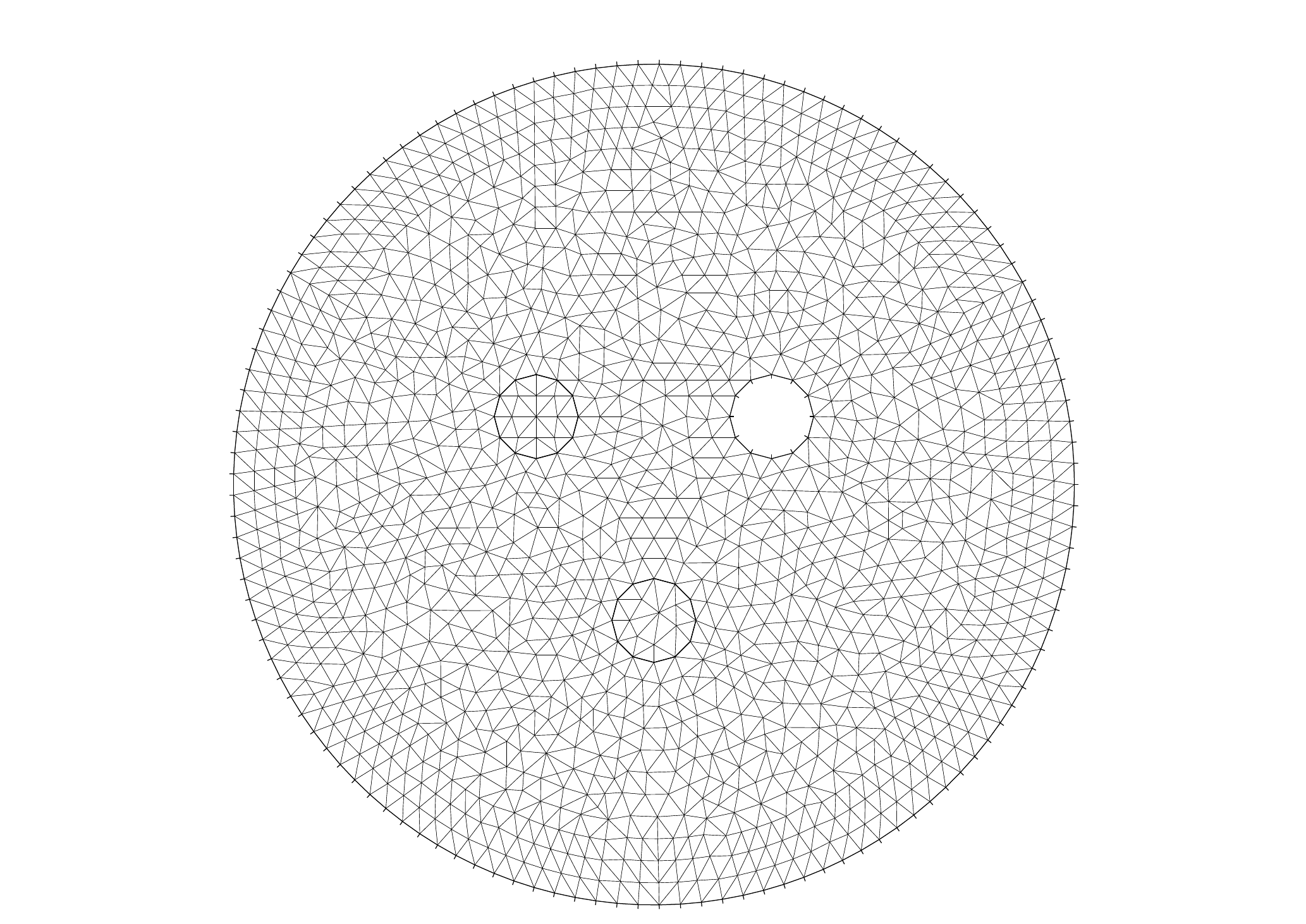}
\includegraphics[width=0.24\textwidth]{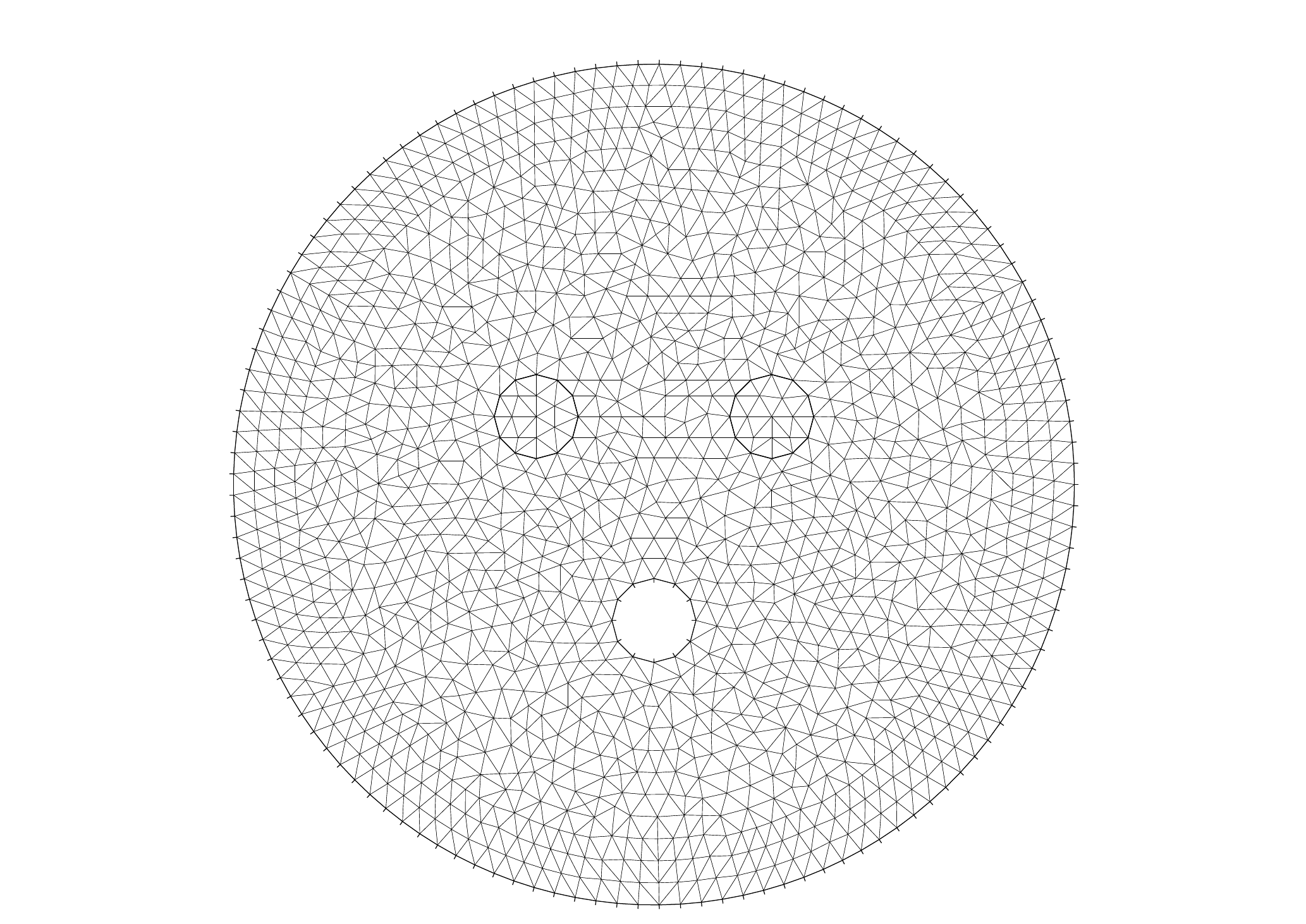}
\caption{From left to right, finite element meshes for both the problem dealt with in Subsection \ref{subsection: rate of convergence} and its three associated one-object subproblems.}\label{meshes}
\end{figure}

\subsection{Rate of convergence in a bounded domain}\label{subsection: rate of convergence}
The first numerical experiment concerns the rate of convergence of the method. It is inspired by a counterexample to the convergence of the parallel form of the method of reflections found in \cite{Ichiki:2001}. In two dimensions, we consider a bounded domain, namely a ball centred at the origin and with radius equal to $10$, containing three objects, which are balls with identical radii equal to $1$ and centres respectively set at the vertices of an equilateral triangle whose centroid lies at the origin. Using the length $l$ of a side of the triangle as a parameter, we investigate the convergence of the different forms of the method for solving a Dirichlet problem for the Laplace equation as $l$ varies.

Figure~\ref{fig:var_r} presents the relative error of the method as a function of the number of cycles for three distinct values of the parameter $l$. This relative error is based on the $\ell^2$-norm of the difference between the numerical solution for the full problem with that of the method of reflections after a given number of cycles computed at a finite number of points in the domain $\Omega\setminus\cup_{i=1}^3\overline{O_i}$. One can observe the sequential and the averaged parallel forms of the method are convergent in each case, as predicted by the theoretical results. The parallel form is seen to diverge for the smallest chosen value of $l$, but converges for larger values of the parameter.

\begin{figure}[h]
\centering
\subfigure[$l=1.2$]{
\begin{tikzpicture}
  \begin{axis}[width=0.33\textwidth,font=\tiny,
    tick label style={scale=0.8},
    xmin=0,xmax=100,
    xtick={0,20,40,60,80,100},
    x label style={at={(axis description cs:0.5,0.1)}},
    xlabel={Number of cycles},
    ymode=log,
    ymin=1e-5,ymax=1e25,
    ytick={1e-5,1,1e5,1e10,1e15,1e20,1e25},
    y label style={at={(axis description cs:0.08,0.5)}},
    ylabel={Error},
    legend style={at={(0.03,0.97)},anchor=north west},
    legend cell align=left]
    \addplot [mark repeat=5,mark=*,color=blue] table[x={iter},y={error_seq}]{errorl12.txt};
    \addlegendentry[scale=0.7]{sequential}
    \addplot [mark repeat=5,mark=square*,color=green] table[x={iter},y={error_par}]{errorl12.txt};
    \addlegendentry[scale=0.7]{parallel}
    \addplot [mark repeat=5,mark=triangle*,color=red] table[x={iter},y={error_avgpar}]{errorl12.txt};
    \addlegendentry[scale=0.7]{av. parallel}
  \end{axis}
\end{tikzpicture}}
\subfigure[$l=4$]{
\begin{tikzpicture}
  \begin{axis}[width=0.33\textwidth,font=\tiny,
    tick label style={scale=0.8},
    xmin=0,xmax=100,
    x label style={at={(axis description cs:0.5,0.1)}},
    xlabel={Number of cycles},
    xtick={0,20,40,60,80,100},
    ymode=log,
    ymin=1e-15,ymax=1e5,
    ytick={1e-15,1e-10,1e-5,1,1e5},
    y label style={at={(axis description cs:0.08,0.5)}},
    ylabel={Error},
    legend style={at={(0.97,0.97)},anchor=north east},
    legend cell align=left]
    \addplot [mark repeat=5,mark=*,color=blue] table[x={iter},y={error_seq}]{errorl40.txt};
    \addlegendentry[scale=0.7]{sequential}
    \addplot [mark repeat=5,mark=square*,color=green] table[x={iter},y={error_par}]{errorl40.txt};
    \addlegendentry[scale=0.7]{parallel}
    \addplot [mark repeat=5,mark=triangle*,color=red] table[x={iter},y={error_avgpar}]{errorl40.txt};
    \addlegendentry[scale=0.7]{av. parallel}
  \end{axis}
\end{tikzpicture}
}
\subfigure[$l=8$]{
\begin{tikzpicture}
  \begin{axis}[width=0.33\textwidth,font=\tiny,
    tick label style={scale=0.8},
    xmin=0,xmax=100,
    xtick={0,20,40,60,80,100},
    x label style={at={(axis description cs:0.5,0.1)}},
    xlabel={Number of cycles},
    ymode=log,
    ymin=1e-15,ymax=1e5,
    ytick={1e-15,1e-10,1e-5,1,1e5},
    y label style={at={(axis description cs:0.08,0.5)}},
    ylabel={Error},
    legend style={at={(0.97,0.97)},anchor=north east},
    legend cell align=left]
    \addplot [mark repeat=5,mark=*,color=blue] table[x={iter},y={error_seq}]{errorl80.txt};
    \addlegendentry[scale=0.7]{sequential}
    \addplot [mark repeat=5,mark=square*,color=green] table[x={iter},y={error_par}]{errorl80.txt};
    \addlegendentry[scale=0.7]{parallel}
    \addplot [mark repeat=5,mark=triangle*,color=red] table[x={iter},y={error_avgpar}]{errorl80.txt};
    \addlegendentry[scale=0.7]{av. parallel}
  \end{axis}
\end{tikzpicture}
}

\caption{Relative error of the method of reflections $\ell^2$-norm as a function of the number of cycles for three chosen values of the distance between the objects.}\label{fig:var_r}
\end{figure}
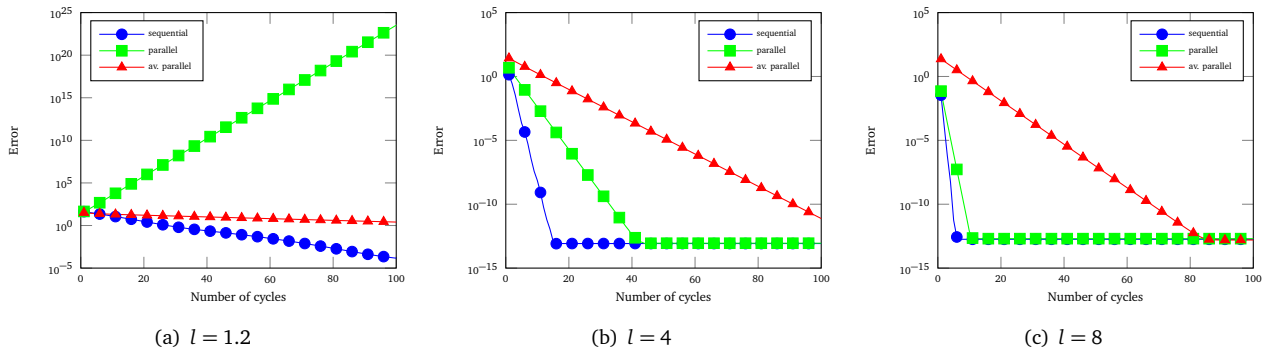

We note that the convergence is linear and that its rate increases with the value of $l$. The rate of convergence of the sequential form is also always higher than that of the averaged parallel form. This is not a surprise, as the convergence of the method of alternating projections is generally\footnote{This fact can be commonly observed in practice (see \cite{Censor:2012} for instance) and is theoretically proved for $N=2$ in \cite{Reich:2017}.} faster than that of its simultaneous counterpart. Also, the equal weights used in the averaging of the reflections may not constitute an optimal choice with respect to the rate of convergence. Moreover, from an effective computational perspective, the sequential form of the method may not be the most efficient one when a large number of objects is involved, since the implementation of the parallel variants can be achieved in practice using parallelisation. Nevertheless, a quantitative study of the trade-off between the parallelisation and the rates of convergence of the different forms of the method is out of the scope of the present work.

\subsection{Mixed boundary conditions: a case of divergence of the sequential form}\label{divergence example}
Since Theorem~\ref{convergence sequential} only ensures unconditional convergence of the sequential form of the method of reflections in an orthogonal setting, cases of divergence are expected for boundary value problems falling outside of this framework, but may prove elusive. Such a configuration was obtained in two dimensions by considering a ball of radius equal to $2$ centred at the origin, on which a Dirichlet boundary condition with constant datum is imposed, 
surrounded by a C-shaped set, on which a Neumann boundary condition with constant datum is imposed, 
both contained in the bounded domain previously considered. The setting of this example is shown in Figure~\ref{div}.

\begin{figure}[h]
\centering
\resizebox{0.25\textwidth}{!}{
\begin{tikzpicture}[scale=0.5]
\def\rOmega{10};
\draw[color=red] (0.,0.) circle (\rOmega) ;

\def\xa{0.};
\def\ya{0.};
\def\raa{3.};
\def\rab{5.};
\def\dtheta{30};
\draw[color=blue] ({\xa+\raa*cos(\dtheta)},{\ya+\raa*sin(\dtheta)}) arc (\dtheta:360-\dtheta:\raa) ;
\draw[color=blue] ({\xa+\rab*cos(\dtheta)},{\ya+\rab*sin(\dtheta)}) arc (\dtheta:360-\dtheta:\rab) ;
\draw[color=blue] ({\xa+\raa*cos(\dtheta)},{\ya+\raa*sin(\dtheta)}) arc (180+\dtheta:360+\dtheta:{0.5*(\rab-\raa)}) ;
\draw[color=blue] ({\xa+\raa*cos(360-\dtheta)},{\ya+\raa*sin(360-\dtheta)}) arc (180-\dtheta:-\dtheta:{0.5*(\rab-\raa)}) ;

\def\xb{0.};
\def\yb{0.};
\def\rb{2};
\draw[color=blue] (\xb,\yb) circle (\rb);
\draw[color=white] (0,-13) circle(0.1);
\end{tikzpicture}
}
\hspace{10mm}
\includegraphics[width=0.46\textwidth]{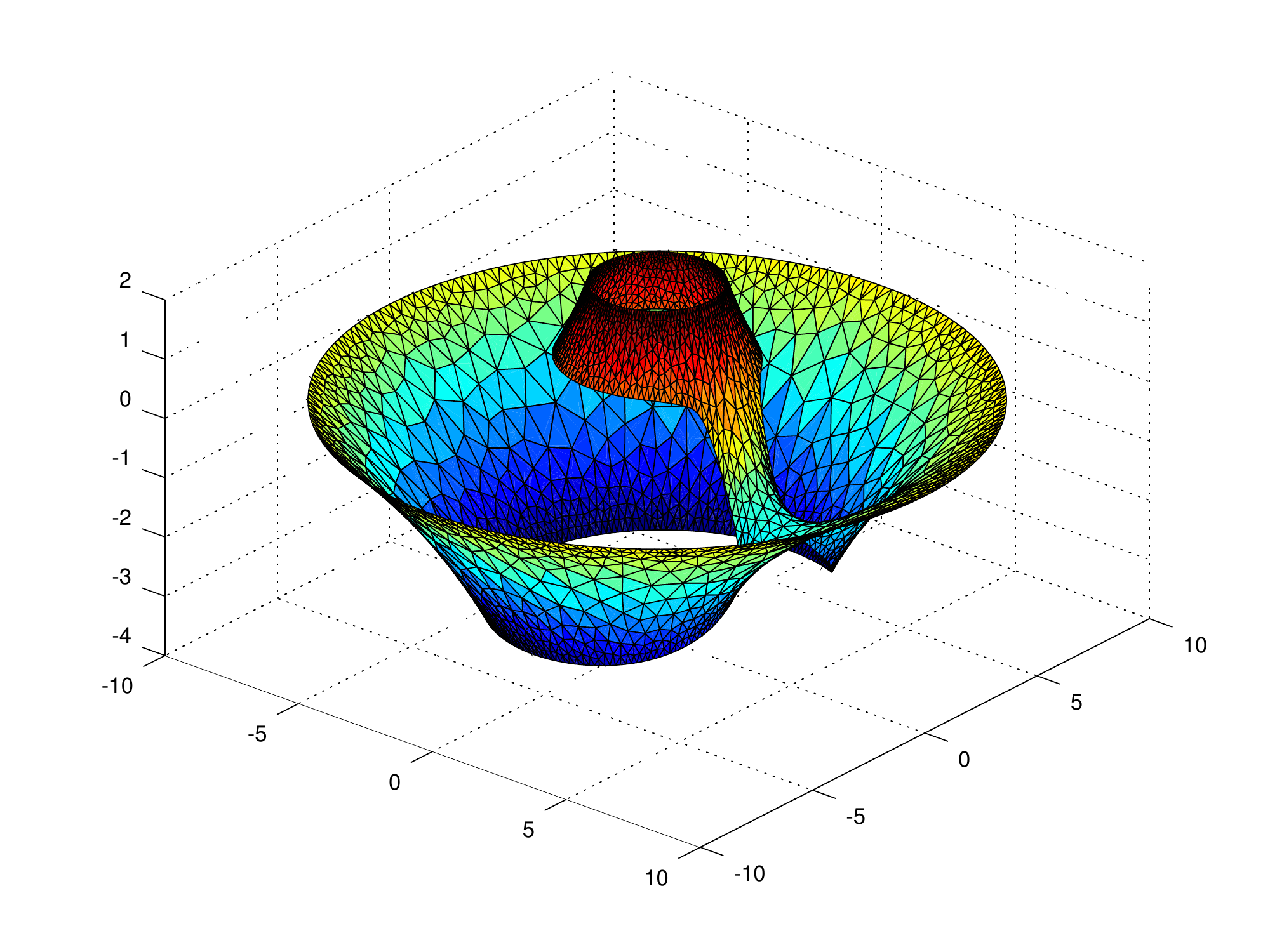}
\caption{Example of configuration giving rise to a divergence of the method of reflections when applied to the solution of a Laplace problem with mixed (Dirichlet and Neumann) boundary conditions. Left figure: representation of the domain (with red boundary) and two objects (with blue boundaries). Right figure: numerical representation of the computed reference solution for this problem.}\label{div}
\end{figure}

In the numerical experiments, none of the three forms of the method converged for such a configuration.

\subsection{Influence of the distance in an unbounded domain}
The asymptotic behaviour of the rate of convergence of the method seen as a function of the distance between the objects is finally investigated. To do this, the Laplace problem dealt with in Subsection \ref{subsection: rate of convergence} is recast as an exterior problem set in $\mathbb{R}^2$, considering Neumann boundary conditions satisfying instead of Dirichlet ones, chosen such a way that the solution tends to zero at infinity. 
The distance $l$ between the objects being fixed, the corresponding contraction coefficient $K(l)$ is defined by
\[
K(l)=\lim_{k\to +\infty}\frac{\|u^{(k+1)}-u\|}{\|u^{(k)}-u\|},
\]
where $\|\cdot\|$ 
is the $\ell^2$-norm of a set of values of the considered function at a finite number of points on the boundaries of the objects, and estimated in practice by fitting the error as a function of the iteration. The results are presented in Figure~\ref{conv}. The convergence rate of the averaged parallel form appears to be asymptotically independent of the distance between the objects, but the theoretical proof of such a result is an open question.

\begin{figure}[h]
\centering
\begin{tikzpicture}[]
  \begin{axis}[width=0.46\textwidth,font=\tiny,
    tick label style={scale=0.8},
    xmode=log,
    xmin=2,xmax=10,
    xtick={2,10},
    xlabel={Distance between the objects},
    ymode=log,
    ymin=1e-3,ymax=1,
    ylabel={Contraction factor},
    legend style={at={(0.03,0.03)},anchor=south west},
    legend cell align=left]
    \addplot [mark repeat=5,color=blue,mark=*] table[x={R},y={seq_coef}]{coeff_dist.txt};
    \addlegendentry[scale=0.7]{seqential slope = $-3.2$}
    \addplot [mark repeat=5,color=green,mark=square*] table[x={R},y={par_coef}]{coeff_dist.txt};
    \addlegendentry[scale=0.7]{paralell slope = $-2.09$}
    \addplot [mark repeat=5,color=red,mark=triangle*] table[x={R},y={avgpar_coef}]{coeff_dist.txt};
    \addlegendentry[scale=0.7]{av. paralell slope = $0.01$}
    
    \addplot[black,domain=2.354:10] {2.889698*exp(-3.196174*ln(x))};
    \addplot[black,domain=2.354:10] {1.676215*exp(-2.087753*ln(x))};
    \addplot[black,domain=2.354:10] {exp(0.014953*ln(x)-0.454782)};
  \end{axis}
\end{tikzpicture}
\caption{Plot of the value of the estimated contraction factor of the method as a function of the distance between the objects. The linear regressions are done using values associated with the five largest considered distances and are represented by black solid lines.}\label{conv}
\end{figure}
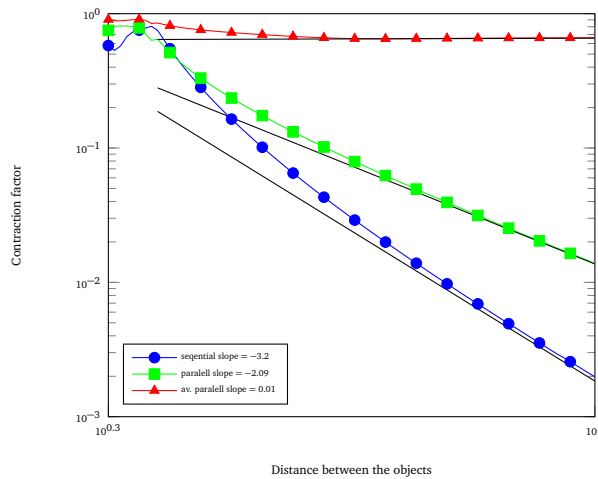

More generally, scalability issues, that is the analysis of the rate of convergence as a function of the number of the objects involved in the problem, is out of the scope of this paper. Note that settings for which a decomposition method achieves scalability have already been exhibited for Schwarz-type methods, see, e.g., \cite{Ciaramella:2017}, but remains an open question in the context of the method of reflections.

\subsection*{Acknowledgements}
Philippe Laurent would like to thank Fr\'ed\'eric Boyer for introducing him to the method of reflections. Guillaume Legendre would like to thank Christophe Hazard for pointing him to the relevant paper \cite{Balabane:2004}, and Mikhael Balabane himself for an interesting discussion on the topic. Julien Salomon would like to thank Gabriele Ciaramella, Olivier Glass and Alexandre Munnier for helpful discussions. Finally, the authors collectively thank the anonymous reviewers whose comments and suggestions helped improve the manuscript.
 
\printbibliography
\end{document}